\newtheorem{theorem}{Theorem}[section]
\newtheorem{lemma}{Lemma}[section]
\newtheorem{corollary}{Corollary}[theorem]
\begin{document}

\title[\tiny Generating functions for zeta stars]{\small Generating functions for multiple zeta star values}

\author[\tiny Kh.~Hessami Pilehrood]{Kh.~Hessami Pilehrood}
\address{The Fields Institute for Research in Mathematical Sciences, 222 College St, Toronto, Ontario M5T 3J1 Canada}
\email{hessamik@gmail.com}

\author{T.~Hessami Pilehrood}
\address{The Fields Institute for Research in Mathematical Sciences, 222 College St, Toronto, Ontario M5T 3J1 Canada}
\email{hessamit@gmail.com}

\subjclass[2010]{11M32, 11M35, 05A15, 30B10, 30D05, 39B32}
\keywords{Multiple zeta star value, multiple zeta value, generating function, Euler sum}

\begin{abstract}
We study generating functions for multiple zeta star values in general form.
These generating functions provide a connection between multiple zeta star values  and multiple Euler sums, which allows us to express 
each multiple zeta star value in terms of multiple alternating Euler sums, and specifically, reduce the length of blocks of twos in the resulting  sums.

\end{abstract}

\maketitle

\section{Introduction}
Multiple Euler sums and multiple zeta  values (MZVs) have been of interest for mathematicians and physicists for more than two decades.
The systematic study of MZVs has started from works of Hoffman and Zagier in the 1990s, although some partial historical results
are dated back to the work of Euler. In this paper, we will study generating functions for multiple zeta star values in general form.
These generating functions provide a connection between multiple zeta star values (MZSVs) and multiple Euler sums, which allows us to express 
each  MZSV in terms of multiple Euler sums, and specifically, reduce the length of blocks of twos in the resulting  sums.

To begin with precise definitions,
let  ${\mathbb N}$ be  the set of positive integers, ${\mathbb N}_0={\mathbb N}\cup\{0\}$, $\overline{\mathbb N}=\{\overline{s}: \, s\in{\mathbb N}\}$,   and 
${\mathbb D}={\mathbb N}\cup\overline{{\mathbb N}}$ be the set of signed positive integers. For all $s\in{\mathbb N}$, the absolute
value function $|\cdot |$ on ${\mathbb D}$ is defined by $|s|=|\overline{s}|=s$ and the sign function is given  by ${\rm sgn}(s)=1$, ${\rm sgn}(\overline{s})=-1$.

For ${\bf s}=(s_1,\ldots, s_m)\in{\mathbb D}^m$,  we define the (alternating) Euler sums by  nested sums with strict and non-strict inequalities, 
\begin{align} \label{001}
\zeta({\bf s})=\sum_{k_1>\cdots>k_m\ge 1}\prod_{j=1}^m\frac{({\rm sgn}(s_j))^{k_j}}{k_j^{|s_j|}}  \quad\text{and}\quad
\zeta^{\star}({\bf s})=\sum_{k_1\ge\cdots\ge k_m\ge 1}\prod_{j=1}^m\frac{({\rm sgn}(s_j))^{k_j}}{k_j^{|s_j|}},  
\end{align}
respectively, 
where $s_1\ne 1$ in order for the series to converge. If ${\bf s}\in {\mathbb N}^m$, then $\zeta({\bf s})$ is called a {\it multiple zeta value} (MZV)
and $\zeta^{\star}({\bf s})$ a {\it multiple zeta star value} (MZSV). 
We assign two characteristics to each of the sums above: the length (or depth) $\ell({\bf s}):= m$ 
and the weight $|{\bf s}| := |s_1|+\cdots+|s_m|$. 
By convention, we set $\zeta(\emptyset)=\zeta^\star(\emptyset)=1$.
By $\{s\}^m$ we denote the sequence formed by repeating the symbol $\{s\}$ $m$ times.

In particular, for $s\in{\mathbb N}$ we have
\begin{equation*}
\zeta(\overline{s})=\sum_{k=1}^{\infty}\frac{(-1)^k}{k^s}=
\begin{cases}
-\log 2, & \quad\text{if}\quad s=1; \\
(2^{1-s}-1)\zeta(s), & \quad\text{if}\quad s\ge 2,
\end{cases}
\end{equation*}
where $\zeta(s)$ is a value of the Riemann zeta function. The  simplest explicit evaluations of multiple zeta values include
$$
\zeta(\{2\}^m)=\frac{\pi^{2m}}{(2m+1)!}, \qquad \zeta^\star(\{2\}^m)=-2\zeta(\overline{2m}).
$$
These formulas follow from the Laurent series expansions of two functions $\sin(\pi z)/\pi z$ and $\pi z/\sin(\pi z)$ (see \cite{HP:17} for more details).

In 2012, Zagier \cite{Za:12} found explicit formulas for $ \zeta(\{2\}^a, 3, \{2\}^b)$ and $\zeta^{\star}(\{2\}^a, 3, \{2\}^b)$, $a, b\in{\mathbb N}_0$, in terms of rational
linear combinations of products $\zeta(m)\pi^{2n}$ with $m+2n=2a+2b+3$.
These formulas played an important role in Brown's proof \cite{Br:12} of the Hoffman conjecture \cite{Hof:97} that every multiple zeta value is a ${\mathbb Q}$-linear
combination of values $\zeta(s_1, \ldots, s_r)$ for which each $s_i$ is either $2$ or $3$.

In \cite{HP:17}, we gave another proof of the theorem of Zagier via a new representation of the generating function for $\zeta^\star(\{2\}^a, 3, \{2\}^b)$
in terms of the double series
\begin{equation} \label{firstfunc}
\sum_{a,b\ge 0}\zeta^\star(\{2\}^a, 3, \{2\}^b) x^{2a}y^{2b} = 
-2\sum_{j=1}^{\infty}\frac{(-1)^j j}{(j^2-x^2)(j^2-y^2)} - 4\sum_{j=1}^{\infty}\frac{(-1)^j}{j^2-x^2}\sum_{k=1}^{j-1}\frac{k}{k^2-y^2}
\end{equation}
by using a hypergeometric identity of Andrews  
and then evaluating the right-hand side in terms of the digamma function with the help of complex integration and the residue theorem. 
Formula (\ref{firstfunc}) also implies that $\zeta^\star(\{2\}^a, 3, \{2\}^b)$
is expressible in terms of double Euler sums
\begin{equation} \label{eusum}
\zeta^\star(\{2\}^a, 3, \{2\}^b) = -2\zeta(\overline{2a+2b+3})-4\zeta(\overline{2a+2}, 2b+1),
\end{equation}
which by \cite[Theorem 7.2]{FS:98}  can be reduced to linear combinations of products of single zeta values, giving one more proof of Zagier's theorem (see \cite[Remark 2.7]{HPT:14}).
Note that formula (\ref{eusum}) as well as the similar one
$$
\zeta^\star(\{2\}^a, 1, \{2\}^b) = -2\zeta(\overline{2a+2b+1}) -4\zeta(2a+1, \overline{2b}), \qquad a, b\in{\mathbb N},
$$
were proved  even earlier in \cite{HPT:14} by another method using finite binomial identities.  These formulas were generalized by Zhao \cite{Z:16}  and  
Linebarger and Zhao \cite{LZ:15} that led to the proof of the Two-one formula conjectured by Ohno and Zudilin \cite{OZ:08}, which states that
$$
\zeta^\star(\{2\}^{a_1}, 1, \{2\}^{a_2}, \ldots, 1, \{2\}^{a_d}, 1) = \sum_{{\bf p} = (2a_1+1)\circ\cdots\circ(2a_d+1)} 2^{\ell({\bf p})}\zeta({\bf p}),
$$
where ${\bf p}$ runs through all indices of the form $(2a_1+1)\circ\cdots\circ(2a_d+1)$ with ``$\circ$'' being either the symbol ``,'' or the sign  ``$+$''.
These results were  extended  further by the present authors and Zhao \cite[Theorem 1.4]{HPZ:16}, to get formulas for arbitrary multiple zeta star values
$\zeta^\star(\{2\}^{a_0}, c_1, \{2\}^{a_1}, \ldots, c_d, \{2\}^{a_d})$ in terms of multiple Euler sums.

In this paper, we generalize the formula for generating function (\ref{firstfunc}) to include generating functions of multiple zeta star values 
with an arbitrary number of blocks of twos. 
\begin{theorem} \label{2-3}
For any integer $d\ge 1$ and any complex numbers $z_0, z_1,\ldots, z_d$ with $|z_j|<1, j=0, 1,\ldots, d$,
we have
\begin{equation*}
\begin{split}
\sum_{a_0,  \ldots, a_d\ge 0}&\zeta^{\star}(\{2\}^{a_0}, 3, \{2\}^{a_1},  \ldots, 3, \{2\}^{a_d}) \,z_0^{2a_0}\cdots z_d^{2a_d} \\
 &=-2\sum_{k_0\ge\cdots\ge k_d\ge 1} \frac{(-1)^{k_0} k_d^2}{k_0^2-z_0^2}\, \prod_{i=1}^d
\frac{2^{\Delta(k_{i-1}, k_i)}}{k_i(k_i^2-z_i^2)},
\end{split}
\end{equation*}
where
$$
\Delta(a,b)=\begin{cases}
0, &\quad\text{if}\quad a=b; \\
1, &\quad\text{else}.
\end{cases}
$$
In particular,
\begin{equation*}
\sum_{a_0\ge 0}\zeta^\star(\{2\}^{a_0})\,z_0^{2a_0}=1-2z_0^2\,\sum_{k_0\ge 1}\frac{(-1)^{k_0}}{k_0^2-z_0^2}.
\end{equation*}
\end{theorem}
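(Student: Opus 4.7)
The plan is to recast the theorem as an equivalent Euler-sum identity for each individual multiple zeta star value, and then verify that identity. Expanding the geometric series $\frac{1}{k_i^2-z_i^2}=\sum_{a_i\ge 0} z_i^{2a_i} k_i^{-2a_i-2}$ on the right-hand side (the extra factor $k_d^2$ in the numerator converts the denominator exponent on $k_d$ to $2a_d+1$), and comparing coefficients of $z_0^{2a_0}\cdots z_d^{2a_d}$, the theorem reduces to
\begin{equation*}
\zeta^\star(\{2\}^{a_0}, 3, \{2\}^{a_1},\ldots, 3, \{2\}^{a_d}) = -2\sum_{k_0\ge\cdots\ge k_d\ge 1} \frac{(-1)^{k_0}\prod_{i=1}^d 2^{\Delta(k_{i-1},k_i)}}{k_0^{2a_0+2}\,k_d^{2a_d+1}\prod_{i=1}^{d-1} k_i^{2a_i+3}}.
\end{equation*}
Next, I would expand $\prod_{i=1}^d 2^{\Delta(k_{i-1},k_i)}=\sum_{S\subseteq\{1,\ldots,d\}}\prod_{i\in S}[k_{i-1}>k_i]$, so that the right-hand side becomes a sum over subsets $S$ of $2^{|S|}$-weighted multiple alternating Euler sums, each one corresponding to a definite pattern of strict and non-strict inequalities among the $k_i$.

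To finish, I would prove the resulting combinatorial identity by induction on $d$, taking the base case $d=1$ from (\ref{eusum}) established in \cite{HP:17}. In the inductive step, one separates the contributions according to whether $k_{d-1}=k_d$ (merging the tail block $\{2\}^{a_d}$ with $\{2\}^{a_{d-1}}$ on the MZSV side after a suitable reindexing of the nested sum) or $k_{d-1}>k_d$ (decoupling the tail into an independent factor). Equivalently, one could invoke the general formula of \cite[Theorem 1.4]{HPZ:16} for $\zeta^\star(\{2\}^{a_0}, c_1,\{2\}^{a_1},\ldots, c_d,\{2\}^{a_d})$ specialized to $c_i=3$ and match the resulting Euler-sum expressions term by term.

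The main obstacle is the combinatorial matching in the inductive step: the compact factor $\prod 2^{\Delta(k_{i-1},k_i)}$ packages all coincidence patterns among consecutive $k_i$, and carefully tracking how these correspond to the strict/non-strict nested-sum decomposition of $\zeta^\star$ requires precise bookkeeping, especially at the boundary position where the 3-index $k_i$ can coincide with the neighboring $k_{i-1}$. A secondary technical point, which is essentially the content of the separate special case at the end of the theorem, is the role of the Mittag-Leffler expansion $\pi z_0/\sin\pi z_0 = 1 - 2z_0^2\sum_{k\ge 1}(-1)^k/(k^2-z_0^2)$; this both proves the stated formula for $\sum_{a_0\ge 0}\zeta^\star(\{2\}^{a_0})\,z_0^{2a_0}$ and controls the convergence of the infinite product $\prod_{n\ge l_1}(1-z_0^2/n^2)^{-1}$ that arises in a direct generating-function attack on the top $\{2\}^{a_0}$-block.
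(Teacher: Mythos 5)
Your reduction to the coefficient identity
\begin{equation*}
\zeta^\star(\{2\}^{a_0},3,\{2\}^{a_1},\ldots,3,\{2\}^{a_d})
=-2\sum_{k_0\ge\cdots\ge k_d\ge 1}\frac{(-1)^{k_0}\prod_{i=1}^d 2^{\Delta(k_{i-1},k_i)}}{k_0^{2a_0+2}\,k_d^{2a_d+1}\prod_{i=1}^{d-1}k_i^{2a_i+3}}
\end{equation*}
is sound (modulo a routine absolute-convergence check to justify the rearrangement), and your base case $d=1$ does match (\ref{eusum}). But the inductive step on $d$, which you yourself flag as the main obstacle, is where the entire content of the theorem lives, and the sketch you give does not go through as stated. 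When you split according to $k_{d-1}>k_d$ versus $k_{d-1}=k_d$, the strict case does \emph{not} ``decouple the tail into an independent factor'': the inner sum $\sum_{k_d=1}^{k_{d-1}-1}k_d^{-2a_d-1}$ is a partial harmonic sum depending on $k_{d-1}$, not a constant, so neither side of the identity factors. Correspondingly, on the MZSV side the innermost block $\{2\}^{a_d}$ is a nested tail whose summation range depends on the index sitting at the last $3$; there is no clean recursion that peels it off from the inside. Invoking \cite[Theorem 1.4]{HPZ:16} instead would be legitimate as a citation, but the ``term-by-term matching'' you defer is again exactly the nontrivial bookkeeping, so as written the proposal has a genuine gap at its central step.

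For contrast, the paper avoids induction on $d$ at the infinite level altogether: it proves a finite analogue for multiple harmonic star sums, Theorem \ref{T1}, in which the outer sum carries the weight $\binom{n}{k_0}/\binom{n+k_0}{k_0}$. That weight makes a recursion in $n$ (peeling the \emph{largest} summation index off the front of $H_n^\star$, which is the direction that does close up) terminate via the binomial identities of Lemmas \ref{L1} and \ref{L2}, with the induction running on $n+d$; the passage to $\zeta^\star$ is then a limit $n\to\infty$ justified by Lemmas \ref{L3} and \ref{transfer}. If you want to salvage your outline, the most direct fix is to prove your coefficient identity by this finite-to-infinite route, or else to carry out explicitly the subset expansion of $\prod_i 2^{\Delta(k_{i-1},k_i)}$ and match it against the decomposition of $\zeta^\star$ into ordinary $\zeta$'s --- precisely the regrouping the paper alludes to when it notes that Theorem \ref{CZ4} implies \cite[Theorem 1.4]{HPZ:16}. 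Your treatment of the special case $\sum_{a_0\ge 0}\zeta^\star(\{2\}^{a_0})z_0^{2a_0}$ via the partial-fraction expansion of $\pi z_0/\sin(\pi z_0)$ is correct and consistent with the paper.
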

In the same vein we give the  formulas for  the Two-one and Two-three-two-one generating functions.
\begin{theorem} \label{2-1-2}
For any integer $d\ge 0$ and any complex numbers $z_0, z_1,\ldots, z_d$ with $|z_j|<1, j=0, 1,\ldots, d$,
we have
\begin{equation*}
\begin{split}
\sum_{a_0,   \ldots, a_d\ge 0}&\zeta^{\star}(\{2\}^{a_0+1}, 1, \{2\}^{a_1},  \ldots, 1, \{2\}^{a_d}) \,z_0^{2a_0}\cdots z_d^{2a_d} \\
& =-\sum_{k_0\ge \cdots\ge k_d\ge 1} \frac{(-1)^{k_d} k_d}{k_0^2}\, \prod_{i=0}^d
\frac{k_i \cdot 2^{\Delta(k_{i-1}, k_i)}}{k_i^2-z_i^2}
\end{split}
\end{equation*}
and
\begin{equation*}
\begin{split}
\sum_{a_0,   \ldots, a_{d}\ge 0}&\zeta^{\star}(\{2\}^{a_0+1}, 1, \{2\}^{a_1},  \ldots, 1, \{2\}^{a_{d}}, 1) \,z_0^{2a_0}\cdots z_{d}^{2a_{d}} \\
& = \sum_{k_0\ge\cdots\ge k_{d}\ge 1} \frac{1}{k_0^2}\, \prod_{i=0}^{d}
\frac{k_i \cdot 2^{\Delta(k_{i-1}, k_i)}}{k_i^2-z_i^2},
\end{split}
\end{equation*}
where $k_{-1}=0$.
\end{theorem}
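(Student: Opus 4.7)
The plan is to establish both identities in parallel, following the block-expansion strategy that underlies Theorem \ref{2-3}. The central tool is the classical identity
\[
\sum_{a\ge 0}z^{2a}\sum_{M\ge n_1\ge\cdots\ge n_a\ge N}\frac{1}{n_1^2\cdots n_a^2}=\prod_{k=N}^{M}\frac{k^2}{k^2-z^2},
\]
which collapses the generating series over a block of twos into a rational function.

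I would first write each MZSV on the left-hand side as an iterated non-strict sum, isolating the index $m_j$ sitting at the $j$-th occurrence of the exponent $1$. The outer block $\{2\}^{a_0+1}$ has one index whose upper bound is infinite, so I would add its outermost value $K_0$ as an explicit summation variable and separate off the isolated factor $1/K_0^2$ it contributes. Applying the block identity above to each block of twos then reduces the generating function to a sum over $K_0\ge m_1\ge\cdots\ge m_d\ge 1$ (respectively $\ge m_{d+1}\ge 1$ in the second formula), weighted by $\frac{1}{K_0^2\,m_1\cdots m_d}$ times a product of factors $\prod_{k=m_{i+1}}^{m_i}\frac{k^2}{k^2-z_i^2}$.

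The next step is to expand each block product in partial fractions,
\[
\prod_{k=N}^{M}\frac{k^2}{k^2-z^2}=\sum_{k=N}^{M}\frac{c_{N,M,k}}{k^2-z^2},\qquad c_{N,M,k}=k^2\prod_{\substack{j=N\\ j\ne k}}^{M}\frac{j^2}{j^2-k^2},
\]
whose residues carry a sign $(-1)^{k-N}$. Interchanging the order of summation and performing the sums over the ``1''-indices $m_1,m_2,\ldots$ telescopically, the signs combine so that each interior sum collapses onto the two boundary values $k_{j-1}$ and $k_j$: when these are distinct both contributions survive and combine into a factor of $2$ (producing $2^{\Delta(k_{j-1},k_j)}$ with $\Delta=1$), whereas when they coincide the two contributions merge into a single one ($\Delta=0$). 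The convention $k_{-1}=0$ arises naturally because $m_1$ has no upper constraint other than $K_0$.

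The two formulas differ only in the treatment of the innermost layer. For the first formula the sum $\sum_{m_d=1}^{k_{d-1}}\frac{1}{m_d}(\dots)$ acquires, via the alternating residues, the factor $(-1)^{k_d}$, exactly as the $(-1)^j$ appears in the prototype (\ref{firstfunc}); for the second formula the extra trailing ``$1$'' contributes an additional harmonic sum over $m_{d+1}\in[1,m_d]$ whose alternation cancels the sign, leaving a non-alternating $+1$ together with the isolated $\frac{1}{k_0^2}$ prefactor. The main technical obstacle will be the bookkeeping in the telescoping step: one must verify that after summing over every $m_j$ the nested product of residue coefficients condenses precisely into $\prod_{i=0}^{d}\frac{k_i\cdot 2^{\Delta(k_{i-1},k_i)}}{k_i^2-z_i^2}$ with no spurious remainders. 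I would handle this by induction on $d$, peeling off one $(1,\{2\}^{a_j})$ pair at each step to reduce to a lower-depth instance.
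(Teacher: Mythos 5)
Your starting point---collapsing each block of twos into the finite product $\prod_{k=N}^{M}k^{2}/(k^{2}-z^{2})$ and then expanding in partial fractions---is a genuinely different route from the paper's. The paper proves a finite-$n$ identity for the multiple harmonic star sums $H_n^{\star}$ by induction on $n+d$ (Theorem \ref{T1}, via the recurrence (\ref{rec}) and the binomial identities of Lemmas \ref{L1}--\ref{L2}), passes to the limit with Lemmas \ref{L3} and \ref{transfer}, and then reads off Theorem \ref{2-1-2} from Theorem \ref{CZ2} with $m=0$ and all $c_i=1$ (and from Corollary \ref{CZ3.5} for the second formula); the product representation you begin with appears in the paper only as (\ref{eq11}), in the service of the limit transfer. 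Your route is viable in principle, but as written it has a genuine gap: the ``telescoping'' step, in which the sums over the $1$-indices $m_1,\ldots,m_d$ and over the outermost index $K_0$ are claimed to collapse onto the pole variables with factors $2^{\Delta(k_{i-1},k_i)}$, is the entire content of the theorem and is only asserted. Concretely, writing $A_j(m,M)=j^{2}\prod_{i=m,\,i\ne j}^{M}\frac{i^{2}}{i^{2}-j^{2}}$ for your residues, already the outermost layer of the first formula (take $d=1$, $z_1=0$) requires the closed form
\begin{equation*}
\sum_{K_0\ge j}\frac{1}{K_0^{2}}\sum_{m=1}^{j}\frac{A_j(m,K_0)}{m}=\frac{2}{j},
\end{equation*}
and each interior layer requires an analogous two-sided evaluation producing the $2^{\Delta}$ factor. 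These are nontrivial identities: since $A_j(1,n)=(-1)^{j-1}2j^{2}\binom{n}{j}/\binom{n+j}{j}$, they are essentially equivalent to (\ref{part1})--(\ref{Part2}), so carrying out your plan honestly means re-deriving Lemmas \ref{L1} and \ref{L2} in residue form; ``the signs combine so that each interior sum collapses onto the two boundary values'' is a description of the answer, not an argument for it.

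A second, smaller but real issue is convergence. The residues do not decay: for fixed $j$ one has $A_j(1,n)\to(-1)^{j-1}2j^{2}$ as $n\to\infty$, so after the partial-fraction expansion the multiple series you propose to rearrange are only conditionally convergent, and the interchanges of summation order (poles to the outside, $m_j$'s and $K_0$ to the inside) need justification. This is precisely why the paper keeps every manipulation at the level of the finite sums $H_n^{\star}$ and isolates the delicate limit in Lemmas \ref{L3} and \ref{transfer}. I would suggest either performing your partial-fraction computation for the truncated sums $H_{n}^{\star}$, where all sums are finite and the boundary identities become Lemma \ref{L2}, and then invoking the paper's limit lemmas; or supplying the residue identities together with an Abel-summation argument for the rearrangement. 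Your treatment of the trailing ``$1$'' in the second formula (an inner alternating sum evaluating to $-1$, removing the sign $(-1)^{k_d}$) is correct in outline and matches the computation in Corollary \ref{C3}, but it sits on top of the same unproved collapse.
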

\begin{theorem} \label{2-3-1}
For any integer $d\ge 1$ and any complex numbers $z_0, z_1,\ldots, z_{2d}$ with $|z_j|<1, j=0, 1,\ldots, 2d$,
we have
\begin{equation*}
\begin{split}
\sum_{a_0, \ldots, a_{2d}\ge 0}&\zeta^{\star}(\{2\}^{a_0}, 3, \{2\}^{a_1}, 1,  \ldots, 3, \{2\}^{a_{2d-1}}, 1, \{2\}^{a_{2d}}) \,z_0^{2a_0}\cdots z_{2d}^{2a_{2d}} \\
& =-\sum_{k_0\ge\cdots\ge k_{2d}\ge 1} k_{2d}^2\, \prod_{i=0}^{2d}
\frac{(-1)^{k_i} \cdot 2^{\Delta(k_{i-1}, k_i)}}{k_i^2-z_i^2},
\end{split}
\end{equation*}
where $k_{-1}=0$, and
\begin{equation*}
\begin{split}
\sum_{a_1,  \ldots, a_{2d}\ge 0}&\zeta^{\star}(\{2\}^{a_1}, 3, \{2\}^{a_2}, 1,  \ldots, 3, \{2\}^{a_{2d}}, 1) \,z_1^{2a_1}\cdots z_{2d}^{2a_{2d}} \\
& =\sum_{k_1\ge\cdots\ge k_{2d}\ge 1} \,\, \prod_{i=1}^{2d}
\frac{(-1)^{k_i} \cdot 2^{\Delta(k_{i-1}, k_i)}}{k_i^2-z_i^2},
\end{split}
\end{equation*}
where $k_0=0$.
\end{theorem}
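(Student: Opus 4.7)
The plan is to follow the template of the proofs of Theorems~\ref{2-3} and~\ref{2-1-2}: unfold the left-hand side as a nested iterated sum, exchange summations to perform the $a_i$-sums first via the complete-symmetric-function identity
$$
\sum_{a\ge 0} z^{2a} \sum_{n\ge j_1\ge\cdots\ge j_a\ge m}\prod_{\ell=1}^a\frac{1}{j_\ell^2} \;=\; \prod_{j=m}^n\frac{j^2}{j^2-z^2},
$$
and then compress the result into the claimed compact form by a partial-fraction/telescoping argument. Concretely, for the first identity I would label by $m_1\ge\cdots\ge m_{2d}\ge 1$ the summation indices at the $2d$ non-two entries (exponent $3$ at odd $i$, exponent $1$ at even $i$), so that block $\{2\}^{a_i}$ sits between $m_i$ above and $m_{i+1}$ below (with $m_0=\infty$ for the topmost block $\{2\}^{a_0}$ and $m_{2d+1}=1$ for the bottommost block $\{2\}^{a_{2d}}$). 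Summing over the $a_i$'s with weights $z_i^{2a_i}$ converts the left-hand side into a $(2d)$-fold sum over the pivots of $2d$ inverse-power factors $1/m_i^{s_i}$ and $2d+1$ block products $\prod_{j=m_{i+1}}^{m_i}j^2/(j^2-z_i^2)$. The top block, an infinite product, is handled by the Euler formula $\prod_{j=1}^\infty j^2/(j^2-z^2)=\pi z/\sin\pi z$ and its Mittag--Leffler expansion $1-2z^2\sum_{k\ge 1}(-1)^k/(k^2-z^2)$, which is precisely the ``in particular'' special case of Theorem~\ref{2-3}.

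The crux is the rearrangement that replaces each block product $\prod_{j=M}^N j^2/(j^2-z^2)$ by an alternating single-index sum $\sum_{k=M}^N(-1)^k(\cdots)/(k^2-z^2)$ and, after combining with the $1/m_i^{s_i}$ factors and chaining across all $2d+1$ blocks, collapses everything to a single $(2d+1)$-fold sum in new indices $k_0\ge\cdots\ge k_{2d}\ge 1$ (one per block, with $k_i$ tied to the variable $z_i$). The factor $2^{\Delta(k_{i-1},k_i)}$ records boundary behavior at the transition between consecutive blocks: when the adjacent indices coincide, the intermediate block is empty and contributes a $1$; when they differ, both ``end'' residues of the partial-fraction expansion survive and contribute a $2$. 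The outer factor $k_{2d}^2$ comes from the bottom block $\{2\}^{a_{2d}}$, whose lower bound is the fixed value $1$ rather than a free pivot, and the overall minus sign is the net parity contribution of the odd-weight entries.

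The second identity follows from the same computation with the initial block $\{2\}^{a_0}$ absent and the final entry a $1$ instead of $\{2\}^{a_{2d}}$: the top block product is removed (encoded by the dummy convention $k_0=0$, which forces $\Delta(k_0,k_1)=1$ in the $i=1$ factor), and the bottom block product is removed (eliminating both the outer $k_{2d}^2$ and one overall sign). The main obstacle is the intricate combinatorial bookkeeping of the telescoping step: with exponents alternating as $3,1,3,1,\ldots,3,1$, each pivot acquires an odd-order inverse-power factor which must combine with the appropriate edges of two adjacent block products, and one must verify that after the partial-fraction collapse and interchange of summations the residual binomial and factorial coefficients simplify cleanly to the claimed $(-1)^{k_i}$ signs and $2^{\Delta}$ constants without leftover terms.
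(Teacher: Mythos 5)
Your opening move---rewriting the generating function as a nested sum over the pivots $m_1\ge\cdots\ge m_{2d}$ with block products $\prod_{j=m_{i+1}}^{m_i} j^2/(j^2-z_i^2)$---is sound, and is exactly the representation used in the proof of Lemma \ref{transfer}. But the step you yourself call the crux, namely replacing each block product by an alternating sum over a new index $k_i$ and ``collapsing'' everything to a single $(2d+1)$-fold sum with the factors $(-1)^{k_i}2^{\Delta(k_{i-1},k_i)}/(k_i^2-z_i^2)$, is not a routine partial-fraction/telescoping computation: it is the entire content of the theorem, and you have only described the shape of the answer rather than derived it. If you expand a single finite block $\prod_{j=M}^{N} j^2/(j^2-z^2)$ in partial fractions in $z$, the residue at $z^2=k^2$ carries a ratio of binomial coefficients of the type $\binom{N}{k}\big/\binom{N+k}{k}$ depending on both endpoints, and chaining $2d+1$ such expansions produces coefficients that are products of such ratios. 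The claim that these cancel down to the single factors $2^{\Delta(k_{i-1},k_i)}$ ``without leftover terms'' is a nontrivial family of finite binomial identities---precisely (\ref{part1})--(\ref{part3}) and Lemma \ref{L2}, which rest on a hypergeometric identity of Andrews---and your proposal neither states nor proves them. The ``intricate combinatorial bookkeeping'' you defer to the end is exactly the missing proof.

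Two further structural points. First, there are no separate proofs of Theorems \ref{2-3} and \ref{2-1-2} to use as a template: all three theorems are obtained as specializations of Theorem \ref{T2} (here with the $c_i$ alternating between $3$ and $1$, and Theorem \ref{CZ2}/Corollary \ref{CZ3} handling the trailing ``$1$''), and Theorem \ref{T2} is itself proved by first establishing the finite analogue for multiple harmonic star sums (Theorem \ref{T1}) by induction on $n+d$, with the regularizing factor $\binom{n}{k_0}\big/\binom{n+k_0}{k_0}$, and then passing to the limit. Second, your plan manipulates infinite series directly and interchanges summations over conditionally convergent alternating sums; this rearrangement needs justification, which the paper supplies by working at finite level $n$ and controlling the limit via Lemmas \ref{L3} and \ref{transfer}. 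To complete your argument you would need to (i) prove the binomial cancellation identities and (ii) either work with $H_n^\star$ throughout or justify the rearrangements---at which point you would essentially have reconstructed the paper's proof of Theorem \ref{T1}.
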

For a string of positive integers ${\mathbf r}=(r_1, \ldots, r_c) \in{\mathbb N}^c$ and positive integers $k, m$, define the multiple sharp sum
$$
S^{\sharp}_{k,m}(r_1, \ldots, r_c)=\begin{cases}
\sum\limits_{k\ge l_1\ge \cdots\ge l_c\ge m}\frac{2^{\Delta(k, l_1)+\Delta(l_1, l_2)+\cdots+\Delta(l_{c-1}, l_c)+\Delta(l_c,m)}}{l_1^{r_1}l_2^{r_2}\cdots l_c^{r_c}},
&\quad\text{if}\quad {\mathbf r}\ne\emptyset\,\,\,\,\text{and}\,\,\, k\ge m; \\[10pt]
\qquad 2^{\Delta(k,m)}, &\quad\text{if} \quad {\mathbf r}=\emptyset \,\,\,\,\,\,\text{or}\,\,\,\, \,k<m.
\end{cases}
$$
Actually, our main result is more general than the above three theorems, since it provides formulas for generating functions of arbitrary multiple zeta star values
with an arbitrary number of blocks of twos. 
\begin{theorem} \label{T2}
For any integers  $d\ge 1$, $c_1, \ldots, c_d\in{\mathbb N}\setminus{\{2\}}$, $c_1\ge 3$, and any complex numbers $z_0, z_1, \ldots, z_d$ with $|z_j|<1, j=0, 1,\ldots, d$,
we have
\begin{equation} \label{mainth} 
\begin{split}
\sum_{a_0, a_1, \ldots, a_d\ge 0}&\zeta^{\star}(\{2\}^{a_0}, c_1, \{2\}^{a_1},  \ldots, c_d, \{2\}^{a_d}) \,z_0^{2a_0}z_1^{2a_1}\cdots z_d^{2a_d} \\
&=-\sum_{k_0\ge k_1\ge\cdots\ge k_d\ge 1} \prod_{i=0}^d
\frac{(-1)^{k_i\delta_i}k_i^{\delta_i-1}}{k_i^2-z_i^2}\,S^{\sharp}_{k_{i-1}, k_i}(\{1\}^{c_i-3}),
\end{split}
\end{equation}
where $c_0=1, c_{d+1}=0, k_{-1}=0$, $\delta_i=\delta(c_i)+\delta(c_{i+1})$, and
$\delta(c)=\begin{cases}
2, &\quad\text{if}\,\, c=0; \\
1, &\quad\text{if}\,\, c=1; \\
0, &\quad\text{if}\,\, c\ge 3.
\end{cases}
$
\end{theorem}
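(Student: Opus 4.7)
The plan is to reduce Theorem~\ref{T2} to a generating-function analysis of multiple alternating Euler sums, generalizing the derivation that produces Theorem~\ref{2-3} from \eqref{eusum}. The starting point is to invoke~\cite[Theorem 1.4]{HPZ:16}, which expresses each
\[
\zeta^\star(\{2\}^{a_0}, c_1, \{2\}^{a_1}, \ldots, c_d, \{2\}^{a_d})
\]
as a finite $\mathbb{Z}$-linear combination of multiple alternating Euler sums. After consolidation, this combination assumes the shape of a nested sum over anchor indices $k_0, k_1, \ldots, k_d$ (one per block of twos), together with, for each $i$, an inner chain of $\max(c_i - 3, 0)$ auxiliary indices bounded between $k_{i-1}$ and $k_i$; each anchor $k_i$ is raised to the power $2 a_i + (3 - \delta_i)$, and alternating signs $(-1)^{k_i}$ appear at those anchors whose net parity exponent reduces to $\delta_i$.

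I would then multiply both sides by $\prod_{i=0}^d z_i^{2 a_i}$, sum over $a_0, \ldots, a_d \geq 0$, and interchange with the Euler-sum summation (justified by $|z_j|<1$). For each anchor index $k_i$, the geometric identity
\[
\sum_{a_i \geq 0} \frac{z_i^{2 a_i}}{k_i^{2 a_i + (3 - \delta_i)}} = \frac{k_i^{\delta_i - 1}}{k_i^2 - z_i^2}
\]
collapses the $a_i$-summation to the rational factor on the right-hand side of~\eqref{mainth}. The residual nested-integer sum is then rearranged into the non-strict chain $k_0 \geq k_1 \geq \cdots \geq k_d \geq 1$ of the theorem by consolidating, at every transition $a \geq b$, the sub-cases $a > b$ (multiplicity $2$) and $a = b$ (multiplicity $1$) into the single factor $2^{\Delta(a, b)}$. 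Applied along the full chain $k_{i-1} \geq l_1 \geq \cdots \geq l_{c_i - 3} \geq k_i$, this merging reproduces exactly the weight defining $S^{\sharp}_{k_{i-1}, k_i}(\{1\}^{c_i - 3})$.

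The main obstacle will be the detailed sign and multiplicity bookkeeping in the consolidation step: verifying that the various boundary sign contributions combine cleanly into the compact exponent $\delta_i = \delta(c_i) + \delta(c_{i+1})$ and that all sign/multiplicity cases at every boundary merge into the closed form~\eqref{mainth}. The conventions $c_0 = 1,\ c_{d+1} = 0,\ k_{-1} = 0$ ensure a uniform treatment of the endpoints; in particular, the factor $S^{\sharp}_{0, k_0}(\emptyset) = 2^{\Delta(0, k_0)} = 2$ supplies the leading multiplicity absent from the internal $S^{\sharp}$'s and accounts for the unbounded leading block of twos $\{2\}^{a_0}$.
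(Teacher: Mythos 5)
Your outline follows a route the paper itself only mentions in passing (it remarks that Theorem \ref{T2} ``can be obtained from Theorem \ref{CZ4} by summation over $a_0,\ldots,a_d$''), but it leaves the two genuinely hard steps unproved. First, the ``consolidation'' of \cite[Theorem 1.4]{HPZ:16} into the nested form with anchors $k_0\ge\cdots\ge k_d$, weights $2^{\Delta}$, and inner chains $S^{\sharp}_{k_{i-1},k_i}(\{1\}^{c_i-3})$ is precisely the content of Theorem \ref{CZ4}, identity \eqref{CZ4eq1}; asserting that the $\mathbb{Z}$-linear combination of Euler sums ``assumes this shape after consolidation'' restates what has to be proved rather than proving it, and you yourself flag the sign and multiplicity bookkeeping as the main obstacle without carrying it out. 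The paper does not obtain this identity by regrouping a known expansion: it proves the finite analogue for multiple harmonic star sums (Theorem \ref{T1}, hence Corollary \ref{C4}) by induction on $n+d$, with the inductive step resting on the binomial-sum evaluations of Lemma \ref{L2}, such as $\sum_{k=l}^n\frac{(-1)^k\binom{n}{k}}{\binom{n+k}{k}}\,S^{\sharp}_{k,l}(\{1\}^c)=\frac{(-1)^l}{n^{c+1}}\cdot\frac{l\binom{n}{l}}{\binom{n+l}{l}}$, and only then passes to the limit. To make your route rigorous you would have to verify that the expansion in \cite{HPZ:16} regroups exactly into the exponent pattern $\delta_i=\delta(c_i)+\delta(c_{i+1})$ and the sharp-sum weights; nothing in the write-up does this.

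Second, the interchange of $\sum_{\mathbf a}$ with the Euler-sum summation is not ``justified by $|z_j|<1$.'' The multiple sums on the right of \eqref{CZ4eq1} are not absolutely convergent in general: when $c_d=1$ one has $\delta_d=3$, so for $a_d=0$ the innermost factor is $(-1)^{k_d}k_d^{2}/(k_d^2-z_d^2)$, whose terms do not tend to zero, and the iterated sum converges only through cancellation. A crude Fubini argument therefore fails --- the absolutized inner sums can grow like $k_0(\log k_0)^{d-1}$ against an outer weight of order $k_0^{-2}$ --- and one must propagate the cancellation through the tails. This is exactly the role of the paper's Lemma \ref{L3} (which assumes only $|R_k|<M(\log k+1)^c/k^a$ with $a>1$ for the \emph{signed} inner sums) together with the uniform-convergence estimates of Lemma \ref{transfer}. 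Without an argument of this kind, your summation/interchange step is a genuine gap, independent of the consolidation issue.
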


\noindent If all $c_i$ take only values 1 or 3, the formula can be simplified.

\begin{corollary} \label{CZ1}
For any  integers $d\ge 1$, $c_1, \ldots, c_d\in \!\{1, 3\}$, $c_1\ge 3$, and any complex numbers $z_0, z_1,\ldots, z_d$ with $|z_j|<1, j=0, 1,\ldots, d$,
we have
\begin{equation*}
\begin{split}
\sum_{a_0, a_1, \ldots, a_d\ge 0}&\zeta^{\star}(\{2\}^{a_0}, c_1, \{2\}^{a_1},  \ldots, c_d, \{2\}^{a_d}) \,z_0^{2a_0}z_1^{2a_1}\cdots z_d^{2a_d} \\
&=-\sum_{k_0\ge k_1\ge\cdots\ge k_d\ge 1}\prod_{i=0}^d
\frac{(-1)^{k_i\delta_i}k_i^{\delta_i-1}}{k_i^2-z_i^2}\,2^{\Delta(k_{i-1}, k_i)}
\end{split}
\end{equation*}
with the same notation as in Theorem {\rm\ref{T2}}.
\end{corollary}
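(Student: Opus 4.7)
The plan is to deduce Corollary \ref{CZ1} directly from Theorem \ref{T2} by simplifying the factor $S^{\sharp}_{k_{i-1}, k_i}(\{1\}^{c_i - 3})$ appearing in \eqref{mainth} under the additional hypothesis that each $c_i \in \{1, 3\}$. First I would observe that when $c_i = 3$ the exponent $c_i - 3$ equals zero, so $\{1\}^{c_i-3}$ is the empty string, and the second branch of the definition of $S^{\sharp}$ immediately returns $S^{\sharp}_{k_{i-1}, k_i}(\emptyset) = 2^{\Delta(k_{i-1}, k_i)}$. The only delicate point is $c_i = 1$, where the exponent $c_i - 3 = -2$ is negative; for this case I would fix the natural convention (consistent with the way $\{s\}^m$ is introduced in Section~1) that $\{s\}^m$ denotes the empty string whenever $m \le 0$, so the same defining case of $S^{\sharp}$ applies and again gives the value $2^{\Delta(k_{i-1}, k_i)}$.

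With this identification, I would then substitute $S^{\sharp}_{k_{i-1}, k_i}(\{1\}^{c_i - 3}) = 2^{\Delta(k_{i-1}, k_i)}$ into every factor $i = 0, 1, \ldots, d$ of \eqref{mainth}. All of the remaining ingredients---the product $\prod_{i=0}^d (-1)^{k_i \delta_i} k_i^{\delta_i - 1}/(k_i^2 - z_i^2)$ and the nested sum over $k_0 \ge k_1 \ge \cdots \ge k_d \ge 1$, together with the boundary conventions $c_0 = 1$, $c_{d+1} = 0$, $k_{-1} = 0$, and $\delta_i = \delta(c_i) + \delta(c_{i+1})$---are retained verbatim, and the right-hand side of \eqref{mainth} collapses to the expression claimed in the Corollary.

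The main (and essentially the only) obstacle will be making the convention $\{1\}^{-2} = \emptyset$ airtight in the case $c_i = 1$, since this exponent lies outside the range in which $\{1\}^{c_i - 3}$ is literally defined by the repetition notation. This is a bookkeeping issue rather than a mathematical one: the right-hand side of Theorem \ref{T2} is already well-defined under its stated hypotheses, so the $c_i = 1$ value of the sharp factor is unambiguously determined and must equal $2^{\Delta(k_{i-1}, k_i)}$. Once this convention is pinned down, no further computation is required, and Corollary \ref{CZ1} follows by a direct substitution into Theorem \ref{T2}.
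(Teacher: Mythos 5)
Your proposal is correct and matches the paper's (implicit) argument: the corollary is a direct substitution into Theorem \ref{T2} once one notes that $S^{\sharp}_{k_{i-1},k_i}(\{1\}^{c_i-3})$ reduces to $2^{\Delta(k_{i-1},k_i)}$ via the $\mathbf{r}=\emptyset$ branch of the definition for $c_i=3$, and likewise for $c_i=1$ under the convention that $\{1\}^{m}$ is empty for $m\le 0$. That convention is indeed the one the paper uses throughout (it is applied in the $c_1=1$ case of the proof of Theorem \ref{T1} and stated explicitly in the last sentence of Theorem \ref{CZ4}), so your ``bookkeeping'' concern is resolved exactly as you suggest.
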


\noindent  The case $z_d=0$ and $c_d=1$ for $d\ge 2$  also leads to the simplification of the right hand side of~(\ref{mainth}).  Note that after
substitution $z_d=0$ and $c_d=1$ in (\ref{mainth}), we replace $d-1$ by $d$ to get the next corollary.

\begin{corollary} \label{CZ3}
For any integers  $d\ge 1$,  $c_1, \ldots, c_{d}\in{\mathbb N}\setminus{\{2\}}$, $c_1\ge 3$,  and any complex numbers $z_0,  \ldots, z_{d}$ 
with $|z_j|<1, j=0, 1,\ldots, d$,
we have
\begin{equation*} 
\begin{split}
\sum_{a_0, a_1, \ldots, a_{d}\ge 0}&\zeta^{\star}(\{2\}^{a_0}, c_1, \{2\}^{a_1},  \ldots, c_{d}, \{2\}^{a_{d}}, 1) \,z_0^{2a_0}z_1^{2a_1}\cdots z_{d}^{2a_{d}} \\
&=\sum_{k_0\ge k_1\ge\cdots\ge k_{d}\ge 1} \prod_{i=0}^{d}
\frac{(-1)^{k_i\delta_i}k_i^{\delta_i-1}}{k_i^2-z_i^2}\,S^{\sharp}_{k_{i-1}, k_i}(\{1\}^{c_i-3}),
\end{split}
\end{equation*}
where $c_0=c_{d+1}=1$, $k_{-1}=0$, $\delta_i=\delta(c_i)+\delta(c_{i+1})$, and $\delta(c)$
is defined in Theorem {\rm \ref{T2}}.
\end{corollary}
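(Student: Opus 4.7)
The plan is to derive Corollary \ref{CZ3} as a direct specialization of Theorem \ref{T2}, exactly as the remark preceding the statement suggests. I would apply Theorem \ref{T2} with its index $d$ replaced by $d+1$ and then substitute $c_{d+1}=1$ and $z_{d+1}=0$ into the resulting identity. On the left-hand side the substitution $z_{d+1}=0$ kills every summand with $a_{d+1}\ge 1$ (with the convention $0^0=1$), and since $\{2\}^0$ is empty, only the expected generating function $\sum_{a_0,\ldots,a_d\ge 0}\zeta^{\star}(\{2\}^{a_0}, c_1,\ldots,c_d,\{2\}^{a_d},1)\,z_0^{2a_0}\cdots z_d^{2a_d}$ survives.

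The real work is on the right-hand side, where I must evaluate the inner sum over $k_{d+1}$. With $c_{d+1}=1$ and the theorem's boundary convention $c_{d+2}=0$, the definition of $\delta$ gives $\delta_{d+1}=\delta(1)+\delta(0)=3$; moreover the string $\{1\}^{c_{d+1}-3}=\{1\}^{-2}$ is to be read as empty, so $S^{\sharp}_{k_d,k_{d+1}}(\emptyset)=2^{\Delta(k_d,k_{d+1})}$. Hence the $i=d+1$ factor collapses to
\[
\frac{(-1)^{3k_{d+1}}k_{d+1}^{2}}{k_{d+1}^{2}}\cdot 2^{\Delta(k_d,k_{d+1})}
\;=\;(-1)^{k_{d+1}}\,2^{\Delta(k_d,k_{d+1})}.
\]
The summation identity at the heart of the argument is
\[
\sum_{k_{d+1}=1}^{k_d}(-1)^{k_{d+1}}\,2^{\Delta(k_d,k_{d+1})}
\;=\;2\sum_{j=1}^{k_d-1}(-1)^{j}+(-1)^{k_d}\;=\;-1,
\]
which follows from a one-line case analysis on the parity of $k_d$. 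The resulting factor $-1$ cancels the global minus sign of Theorem \ref{T2} and produces the plus sign on the right-hand side of Corollary \ref{CZ3}.

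It remains to check that the residual exponents $\delta_i$ for $0\le i\le d$ are the correct ones. For $i\le d-1$ they are untouched, since they depend only on $c_i,c_{i+1}$ with indices below $d+1$. For $i=d$ one computes $\delta_d=\delta(c_d)+\delta(c_{d+1})=\delta(c_d)+\delta(1)=\delta(c_d)+1$, which agrees with the value assigned in Corollary \ref{CZ3}, where $c_{d+1}=1$ is declared as the new right boundary. All other factors, including the $S^{\sharp}_{k_{i-1},k_i}(\{1\}^{c_i-3})$ terms for $i\le d$, are literally unchanged. Thus, given Theorem \ref{T2}, the only real obstacle is the boundary bookkeeping of the $\delta$ and $c$ conventions together with the elementary summation identity displayed above; the argument is otherwise a mechanical substitution.
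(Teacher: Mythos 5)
Your proposal is correct and follows essentially the same route as the paper: the authors also obtain Corollary \ref{CZ3} by specializing Theorem \ref{T2} with $d$ replaced by $d+1$, $c_{d+1}=1$, $z_{d+1}=0$, and then collapsing the inner sum over $k_{d+1}$ via the identity $\sum_{k_{d+1}=1}^{k_d}(-1)^{k_{d+1}}2^{\Delta(k_d,k_{d+1})}=-1$ (the very computation carried out in the proof of their finite analogue, Corollary \ref{C3}). Your bookkeeping of $\delta_{d+1}=3$, of $S^{\sharp}_{k_d,k_{d+1}}(\emptyset)=2^{\Delta(k_d,k_{d+1})}$, and of the boundary value $\delta_d=\delta(c_d)+\delta(1)$ all matches the paper's conventions.
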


\begin{theorem} \label{CZ2}
For any integers $d\ge 0$, $0\le m\le d$, $c_1, \ldots, c_d\in{\mathbb N}\setminus{\{2\}}$ such that $c_1\ge 3$ if $m\ge 1$, and any complex numbers $z_0, z_1, \ldots, z_d$ 
with $|z_j|<1, j=0, 1,\ldots, d$,
we have
\begin{equation} \label{4.5}
\begin{split}
\underset{a_m\ge 1}{\sum_{a_0, a_1, \ldots, a_d\ge 0}}&\zeta^{\star}(\{2\}^{a_0}, c_1, \{2\}^{a_1},  \ldots, c_d, \{2\}^{a_d}) \,z_0^{2a_0}z_1^{2a_1}\cdots z_d^{2a_d} \\
&=-z_m^2\sum_{k_0\ge k_1\ge\cdots\ge k_d\ge 1}\frac{1}{k_m^2}\,\prod_{i=0}^d
\frac{(-1)^{k_i\delta_i}k_i^{\delta_i-1}}{k_i^2-z_i^2}\,S^{\sharp}_{k_{i-1}, k_i}(\{1\}^{c_i-3})
\end{split}
\end{equation}
with the same notation as in Theorem {\rm \ref{T2}}. 
\end{theorem}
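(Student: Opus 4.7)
The plan is to deduce Theorem \ref{CZ2} from Theorem \ref{T2} by an elementary subtraction. Let $F(z_0,\ldots,z_d)$ denote the generating function on the left-hand side of (\ref{mainth}). Imposing the restriction $a_m\ge 1$ amounts to subtracting the $a_m=0$ slice, and the latter is obtained by setting $z_m=0$:
\begin{equation*}
\underset{a_m\ge 1}{\sum_{a_0,\ldots,a_d\ge 0}}\zeta^{\star}(\cdots)\prod_{j}z_j^{2a_j}
=F(z_0,\ldots,z_d)-F(z_0,\ldots,z_{m-1},0,z_{m+1},\ldots,z_d).
\end{equation*}
I would then substitute the right-hand side of (\ref{mainth}) into both terms. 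In the nested-sum representation the products over $i\ne m$ coincide; only the $i=m$ factor differs, with $1/(k_m^2-z_m^2)$ collapsing to $1/k_m^2$ when $z_m=0$. The sign $(-1)^{k_m\delta_m}$, the power $k_m^{\delta_m-1}$, and the sharp sum $S^{\sharp}_{k_{m-1},k_m}(\{1\}^{c_m-3})$ are identical in the two copies, so subtraction leaves the scalar factor
\begin{equation*}
\frac{1}{k_m^2-z_m^2}-\frac{1}{k_m^2}=\frac{z_m^2}{k_m^2(k_m^2-z_m^2)},
\end{equation*}
and pulling out the common $z_m^2/k_m^2$ produces the right-hand side of (\ref{4.5}) verbatim, with the overall sign $-1$ of Theorem \ref{T2} promoted to $-z_m^2$ as required.

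The main subtlety, and the step I expect to require genuine care, is the boundary case $m=0$ with $c_1=1$. This regime is admitted by Theorem \ref{CZ2} but falls outside the hypotheses of Theorem \ref{T2}: the unrestricted series $F$ itself is then divergent, since at $a_0=0$ the summand is the divergent quantity $\zeta^{\star}(1,\{2\}^{a_1},\ldots)$, and the naive subtraction has no direct meaning. The way around this is to prove (\ref{4.5}) for this case directly, by first performing the substitution $a_0=a_0'+1$ on the left-hand side — which builds the restriction into the summand, forces the first argument of $\zeta^{\star}$ to be $2$, and thus makes every term convergent — and then rerunning the generating-function derivation that underlies Theorem \ref{T2} with this shift baked in from the start. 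An alternative is to regularize $c_1$ away from $1$, apply the subtraction for the regularized parameter, and pass to the limit, checking that the divergent contributions cancel. A parallel comment applies to the degenerate case $d=0$, which is outside the hypothesis $d\ge 1$ of Theorem \ref{T2} but coincides with a direct consequence of Theorem \ref{2-3}.

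With these boundary cases isolated, the remainder of the proof is pure bookkeeping: the product structure, the combinatorial data $\delta_i$ and $S^{\sharp}$, and the convergence bound $|z_j|<1$ are all inherited from Theorem \ref{T2} without further work, and termwise subtraction is justified by absolute convergence of both halves on the polydisc. The principal obstacle is therefore not the algebraic identity itself but the honest direct argument needed in the excluded regime $m=0$, $c_1=1$.
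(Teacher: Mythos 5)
Your main mechanism for $m\ge 1$ --- imposing $a_m\ge 1$ by subtracting the $z_m=0$ specialization and collapsing $\frac{1}{k_m^2-z_m^2}-\frac{1}{k_m^2}=\frac{z_m^2}{k_m^2(k_m^2-z_m^2)}$ --- is exactly the identity the paper uses (relation (\ref{ge1})), the only difference being that the paper performs the subtraction at the level of the finite sums $F_n$ (Corollary \ref{C2}) and then passes to the limit via Lemmas \ref{transfer} and \ref{L3}, whereas you subtract the two already-established infinite identities of Theorem \ref{T2}. For $m\ge 1$ the hypotheses force $c_1\ge 3$ and $d\ge 1$, both series converge absolutely on the polydisc, and your termwise subtraction is legitimate; this part is fine and essentially the paper's argument.

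You correctly isolate where the real content lies: $m=0$ with $c_1=1$ (and the degenerate $d=0$), where the unrestricted series in (\ref{mainth}) diverges and the subtraction has no meaning. But your two proposed fixes leave a genuine gap. ``Rerun the derivation of Theorem \ref{T2} with $a_0=a_0'+1$ baked in'' is not carried out, and it is not obvious that the induction in Theorem \ref{T1} survives this modification without change; regularizing the integer parameter $c_1$ away from $1$ is not viable, since all of Lemmas \ref{L1}--\ref{L2} and the sharp sums $S^{\sharp}$ are defined only for integer exponents. The clean resolution is already inside the paper's toolkit and requires no new derivation: Theorem \ref{CZ4} (equivalently, Corollary \ref{C4} followed by the limit $n\to\infty$ via Lemma \ref{L3}) gives the coefficient-wise identity (\ref{CZ4eq1}) for every individual index with $a_0\ge 1$, with \emph{no} restriction $c_1\ge 3$. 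Multiplying (\ref{CZ4eq1}) by $z_0^{2a_0}\cdots z_d^{2a_d}$ and summing over $a_0\ge 1$, $a_1,\ldots,a_d\ge 0$ (using $\sum_{a_0\ge 1}z_0^{2a_0}k_0^{-2a_0}=z_0^2/(k_0^2-z_0^2)$ and $\sum_{a_i\ge 0}z_i^{2a_i}k_i^{-2a_i}=k_i^2/(k_i^2-z_i^2)$, the interchange being justified by absolute convergence because the leading exponent is now at least $2$) produces (\ref{4.5}) for $m=0$ verbatim. The same summation handles $d=0$; note that citing the ``in particular'' case of Theorem \ref{2-3} here would be circular, since in the paper's logical order Theorem \ref{2-3} is itself a consequence of Theorem \ref{T2}, whereas the identity (\ref{d0}) plus Lemma \ref{L3} is not.
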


\begin{corollary} \label{CZ3.5}
For any integers $d\ge 0$,  $0\le m\le d$, $c_1, \ldots, c_{d}\in{\mathbb N}\setminus{\{2\}}$ such that $c_1\ge 3$ if $m\ge 1$, and any complex numbers $z_0,  \ldots, z_{d}$ 
with $|z_j|<1, j=0, 1,\ldots, d$,
we have
\begin{equation*} 
\begin{split}
\underset{a_m\ge 1}{\sum_{a_0, a_1, \ldots, a_{d}\ge 0}}&\zeta^{\star}(\{2\}^{a_0}, c_1, \{2\}^{a_1},  \ldots, c_{d}, \{2\}^{a_{d}}, 1) \,z_0^{2a_0}z_1^{2a_1}\cdots z_{d}^{2a_{d}} \\
&=z_m^2\sum_{k_0\ge k_1\ge\cdots\ge k_{d}\ge 1}\,\frac{1}{k_m^2}\, \prod_{i=0}^{d}
\frac{(-1)^{k_i\delta_i}k_i^{\delta_i-1}}{k_i^2-z_i^2}\,S^{\sharp}_{k_{i-1}, k_i}(\{1\}^{c_i-3})
\end{split}
\end{equation*}
where $c_0=c_{d+1}=1$, $k_{-1}=0$, $\delta_i=\delta(c_i)+\delta(c_{i+1})$, and $\delta(c)$
is defined in Theorem {\rm \ref{T2}}.
\end{corollary}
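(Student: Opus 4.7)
The plan is to derive Corollary \ref{CZ3.5} as a direct specialization of Theorem \ref{CZ2}, in exact parallel with the way Corollary \ref{CZ3} is obtained from Theorem \ref{T2} in the paper. Specifically, I would apply Theorem \ref{CZ2} after replacing $d$ by $d+1$, setting $c_{d+1}=1$ and $z_{d+1}=0$, and then relabelling. Killing $z_{d+1}$ on the left-hand side forces $a_{d+1}=0$, so the generating-function summand becomes $\zeta^{\star}(\{2\}^{a_0},c_1,\{2\}^{a_1},\ldots,c_d,\{2\}^{a_d},1)$, exactly the index appearing in Corollary \ref{CZ3.5}. The hypothesis $c_{d+1}=1\in\mathbb{N}\setminus\{2\}$ is admissible, the range $0\le m\le d$ lies inside the range $0\le m\le d+1$ required to invoke Theorem \ref{CZ2}, and the condition ``$c_1\ge 3$ if $m\ge 1$'' is preserved verbatim.

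On the right-hand side, the $\delta$-coefficients match term by term: with the enlarged convention ($c_{d+1}=1$, $c_{d+2}=0$) one has $\delta_i'=\delta_i$ for $i<d$ and $\delta_d'=\delta(c_d)+\delta(1)$, which equals the $\delta_d$ of Corollary \ref{CZ3.5} (where $c_{d+1}=1$). A new coefficient $\delta_{d+1}'=\delta(1)+\delta(0)=3$ appears for the appended slot, and the corresponding factor specializes, using that $\{1\}^{-2}$ is the empty tuple and $S^{\sharp}_{k,m}$ on the empty tuple equals $2^{\Delta(k,m)}$, to
\[
\frac{(-1)^{3k_{d+1}}\,k_{d+1}^{\,2}}{k_{d+1}^{\,2}}\;S^{\sharp}_{k_d,k_{d+1}}(\{1\}^{-2}) \;=\; (-1)^{k_{d+1}}\,2^{\Delta(k_d,k_{d+1})}.
\]

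The only genuine computation is then the collapse of the innermost sum over $k_{d+1}$:
\[
\sum_{k_{d+1}=1}^{k_d}(-1)^{k_{d+1}}\,2^{\Delta(k_d,k_{d+1})} \;=\; (-1)^{k_d}+2\sum_{j=1}^{k_d-1}(-1)^{j} \;=\; -1,
\]
valid for every $k_d\ge 1$ irrespective of the parity of $k_d$. Factoring this constant $-1$ out of the multiple sum and combining it with the overall prefactor $-z_m^2$ from Theorem \ref{CZ2} flips the sign to $+z_m^2$, producing precisely the right-hand side of Corollary \ref{CZ3.5}. The main (essentially only) obstacle is administrative: one must carefully verify that the $\delta$-values, the sign exponents, the $S^{\sharp}$ convention on the empty string, and the nested index ranges all line up correctly under the substitution $c_{d+1}=1$, $z_{d+1}=0$ and the subsequent relabelling. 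No further analytic input is required beyond the short alternating-sum identity displayed above.
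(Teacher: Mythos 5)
Your proposal is correct and follows essentially the same route as the paper: the authors also obtain Corollary \ref{CZ3.5} by specializing Theorem \ref{CZ2} with an extra slot $c_{d+1}=1$, $z_{d+1}=0$ and then collapsing the innermost sum over $k_{d+1}$ via the same alternating-sum identity used in the proof of Corollary \ref{C3}. All the bookkeeping you flag (the value $\delta_{d+1}=3$, the empty-string convention for $S^{\sharp}$, and the sign flip from the constant $-1$) matches the paper's computation.
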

The next theorem provides a formula for arbitrary MZSV in terms of Euler sums. One can consider this type of relations as duality relations
between multiple zeta star values and multiple Euler sharp sums. For different types of duality relations known so far for multiple Euler sums, see \cite[Section 6]{BL}.

\begin{theorem} \label{CZ4}
For any integers $d\ge 0$,  $c_1, \ldots, c_{d}\in{\mathbb N}\setminus{\{2\}}$,  $a_0, a_1  \ldots, a_{d}\ge 0$, and $c_1\ge 3$ if $a_0=0$ and $d\ge 1$, and $a_0\ge 1$ if $d=0$,
we have
\begin{equation} \label{CZ4eq1} 
\zeta^{\star}(\{2\}^{a_0}, c_1, \{2\}^{a_1},  \ldots, c_{d}, \{2\}^{a_{d}})
=-\sum_{k_0\ge k_1\ge\cdots\ge k_{d}\ge 1}\,\prod_{i=0}^{d}\,
\frac{(-1)^{k_i\delta_i}}{k_i^{2a_i+3-\delta_i}}\,S^{\sharp}_{k_{i-1}, k_i}(\{1\}^{c_i-3})
\end{equation}
with the same notation as in Theorem {\rm \ref{T2}}, and
\begin{equation} \label{CZ4eq2} 
\zeta^{\star}(\{2\}^{a_0}, c_1, \{2\}^{a_1},  \ldots, c_{d}, \{2\}^{a_{d}}, 1)
=\sum_{k_0\ge\cdots\ge k_{d}\ge 1}\, \prod_{i=0}^{d}\,
\frac{(-1)^{k_i\delta_i}}{k_i^{2a_i+3-\delta_i}}\,S^{\sharp}_{k_{i-1}, k_i}(\{1\}^{c_i-3}),
\end{equation}
where $c_0=c_{d+1}=1$, $k_{-1}=0$, $\delta_i=\delta(c_i)+\delta(c_{i+1})$, and $\delta(c)$
is defined in Theorem {\rm \ref{T2}}.
Moreover, if $c_i\in\{1, 3\}$, then $S^{\sharp}_{k_{i-1}, k_i}(\{1\}^{c_i-3})$ is replaced by $2^{\Delta(k_{i-1}, k_i)}$ in the above formulas.
\end{theorem}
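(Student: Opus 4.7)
The plan is to deduce both identities of Theorem~\ref{CZ4} as coefficient extractions from the generating function identities established earlier. Concretely, (\ref{CZ4eq1}) should be the coefficient of $z_0^{2a_0}z_1^{2a_1}\cdots z_d^{2a_d}$ in (\ref{mainth}) of Theorem~\ref{T2} (the degenerate case $d=0$ being covered by the ``In particular'' formula of Theorem~\ref{2-3}), while (\ref{CZ4eq2}) should be the analogous coefficient in Corollary~\ref{CZ3}. On the left-hand side of (\ref{mainth}), the coefficient in question is by definition the stated MZSV, so the proof reduces to expanding the right-hand side of (\ref{mainth}) as a multiple power series in $z_0^2,\ldots,z_d^2$ and reading off the matching coefficient.

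To do this, I would expand each factor $1/(k_i^2-z_i^2)$ as the geometric series $\sum_{a_i\ge 0} z_i^{2a_i}/k_i^{2a_i+2}$, which converges absolutely since $|z_i|<1\le k_i$. The factor $(-1)^{k_i\delta_i}k_i^{\delta_i-1}/(k_i^2-z_i^2)$ then contributes $(-1)^{k_i\delta_i}/k_i^{2a_i+3-\delta_i}$ to the coefficient of $z_i^{2a_i}$, which is exactly the exponent appearing in the denominator of (\ref{CZ4eq1}). Taking the product over $i=0,\ldots,d$ and preserving the outer monotone summation together with the sharp-sum factors $S^{\sharp}_{k_{i-1},k_i}(\{1\}^{c_i-3})$ delivers (\ref{CZ4eq1}). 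The same manipulation applied to Corollary~\ref{CZ3} yields (\ref{CZ4eq2}). The closing assertion for $c_i\in\{1,3\}$ is immediate from the defining convention for the sharp sum: the string $\{1\}^{c_i-3}$ is regarded as empty in both cases, so $S^{\sharp}_{k_{i-1},k_i}(\{1\}^{c_i-3})=2^{\Delta(k_{i-1},k_i)}$.

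The one genuinely nontrivial step, and the main obstacle I anticipate, is the justification of the termwise coefficient extraction — i.e., the exchange of the $(k_0,\ldots,k_d)$-summation with the geometric-series expansions in the $z_i$'s. I would handle this by a single absolute-convergence estimate: under the admissibility hypotheses of the theorem, the dominating series
$$\sum_{k_0\ge\cdots\ge k_d\ge 1}\prod_{i=0}^d\frac{k_i^{\delta_i-1}}{k_i^{2a_i+2}}\,S^{\sharp}_{k_{i-1},k_i}(\{1\}^{c_i-3})$$
is bounded by a convergent non-alternating multiple Euler sum of weight $2(a_0+\cdots+a_d)+(c_1+\cdots+c_d)$; the admissibility conditions ($c_1\ge 3$ when $a_0=0$ and $d\ge 1$, and $a_0\ge 1$ when $d=0$) are exactly what is needed to keep the leading exponent strictly greater than $1$. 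Once absolute convergence is in hand, Fubini's theorem legitimises the interchange, and the remainder of the proof is purely mechanical bookkeeping of signs and exponents.
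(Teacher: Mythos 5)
Your route --- expand $1/(k_i^2-z_i^2)$ geometrically in the already-established infinite identities of Theorem~\ref{T2} and Corollary~\ref{CZ3} and read off coefficients via Fubini --- is workable where those identities apply, and your absolute-convergence justification, though stated loosely, can be made rigorous there. But it does not prove Theorem~\ref{CZ4} in its stated generality, and the missed cases are exactly the ones the theorem's hypotheses are crafted to include. Theorem~\ref{CZ4} permits $c_1=1$ provided $a_0\ge 1$ (e.g.\ $\zeta^{\star}(2,1,2)$, i.e.\ $d=1$, $c_1=1$, $a_0=a_1=1$), whereas Theorem~\ref{T2} and Corollary~\ref{CZ3} both require $c_1\ge 3$. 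This is not a removable technicality: for $c_1=1$ the generating function $F(1,c_2,\ldots,c_d;z_0,\ldots,z_d)$ contains the divergent term $\zeta^{\star}(1,\{2\}^{a_1},\ldots)$ at $a_0=0$, so there is simply no infinite generating-function identity from which to extract the coefficient of $z_0^{2a_0}\cdots z_d^{2a_d}$. (The restricted generating function of Theorem~\ref{CZ2} with $m=0$ would do, but in the paper's logical order that theorem is itself \emph{deduced from} Theorem~\ref{CZ4}, so invoking it here would be circular. A smaller instance of the same problem: your source for (\ref{CZ4eq2}), Corollary~\ref{CZ3}, is stated only for $d\ge 1$, while (\ref{CZ4eq2}) is asserted for $d\ge 0$.)

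The paper sidesteps all of this by reversing the order of the two limiting operations. It first extracts coefficients at the \emph{finite} level: Corollary~\ref{C4} is obtained from Theorem~\ref{T1} by expanding $1/(k_i^2-z_i^2)$ and comparing coefficients, which is unproblematic because the $k$-summation in (\ref{T1part1}) is finite and Theorem~\ref{T1} imposes no condition $c_1\ge 3$ (finite harmonic sums $H_n^{\star}$ with leading entry $1$ are perfectly well defined). Only then does it let $n\to\infty$ in the resulting identity for each fixed admissible index, using Lemma~\ref{L3} to replace $\binom{n}{k_0}/\binom{n+k_0}{k_0}$ by $1$; the admissibility conditions of Theorem~\ref{CZ4} are precisely what guarantee the decay hypothesis of Lemma~\ref{L3} for the summand. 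To repair your proof you would either have to adopt this finite-first order of operations, or supply a separate argument (not via Theorem~\ref{T2}) for every index with $c_1=1$.
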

Note that Theorem \ref{CZ4} implies  \cite[Theorem 1.4]{HPZ:16} if we expand and regroup the inner sharp sums 
in powers of $2$. 

Summarizing, it is worth mentioning  that generating functions (\ref{mainth})  and (\ref{4.5}) in their generality and simplicity may be very useful in applications. For example, in  \cite{HP:18},
we show how to apply generating functions obtained in this paper for evaluation of certain explicit formulas as well as  sum formulas for multiple zeta star values on 3-2-1 indices.

\section{Auxiliary Statements}

In this section, we prove several lemmas that will be needed in the sequel. From \cite[(2.1), (2.2), and (2.5)]{HPT:14} we have the following statement.
\begin{lemma}  \label{L1}
For any positive integer $n$ and a non-negative integer $l$, we have
\begin{align}
2\sum_{k=l+1}^n (-1)^k \frac{\binom{n}{k}}{\binom{n+k}{k}} &= \frac{(-1)^{l+1}}{n}\cdot\frac{(n-l)\binom{n}{l}}{\binom{n+l}{l}}, \label{part1}\\
2\sum_{k=l+1}^n\frac{k\binom{n}{k}}{\binom{n+k}{k}} &= \frac{(n-l)\binom{n}{l}}{\binom{n+l}{l}}. \label{part2}
\end{align}
If $n\ge 2$, then
\begin{equation}
\sum_{k=1}^n\frac{(-1)^kk^2\binom{n}{k}}{\binom{n+k}{k}}=0. \label{part3}
\end{equation}
\end{lemma}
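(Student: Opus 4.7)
All three identities concern weighted sums of $a_{n,k}:=\binom{n}{k}/\binom{n+k}{k}$, and the key algebraic fact is the ratio
\[
\frac{a_{n,l-1}}{a_{n,l}}=\frac{n+l}{n-l+1},
\]
which is immediate from the definition. The plan is to establish \eqref{part1} and \eqref{part2} by downward induction on $l$ starting from the trivial base $l=n$, and to handle \eqref{part3} separately by a short beta-integral argument.

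For \eqref{part2}, let $T(l)$ and $R(l)$ denote the two sides. Both vanish at $l=n$, so it suffices to verify $T(l-1)-T(l)=R(l-1)-R(l)$ for $1\le l\le n$. The left-hand difference equals $2l\,a_{n,l}$ by definition of $T$. Using the ratio identity to replace $a_{n,l-1}$ by $\tfrac{n+l}{n-l+1}a_{n,l}$, the right-hand difference collapses to
\[
(n+l)a_{n,l}-(n-l)a_{n,l}=2l\,a_{n,l},
\]
as required. Identity \eqref{part1} yields to exactly the same downward induction: the right-hand difference reduces, again via the ratio identity, to $\tfrac{(-1)^l}{n}a_{n,l}\bigl[(n+l)+(n-l)\bigr]=2(-1)^l a_{n,l}$, matching the telescoping of the left side; the boundary term $l=n$ is again trivial.

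For \eqref{part3} I would use the beta-function representation $1/\binom{n+k}{k}=n\int_0^1 x^{n-1}(1-x)^k\,dx$ to rewrite the sum as $n\int_0^1 x^{n-1}P_n(x)\,dx$, where $P_n(x):=\sum_{k=0}^n(-1)^k k^2\binom{n}{k}(1-x)^k$. Applying the operator $y\,\frac{d}{dy}$ twice to $(1-y)^n=\sum_k\binom{n}{k}(-y)^k$ and setting $y=1-x$ yields $P_n(x)=-n(1-x)x^{n-1}+n(n-1)(1-x)^2 x^{n-2}$, and the two resulting elementary beta integrals contribute $-\tfrac{n}{2(2n-1)}$ and $+\tfrac{n}{2(2n-1)}$ respectively, summing to zero (the second integral requires $n\ge 2$, matching the hypothesis).

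The potential sticking point is purely bookkeeping: tracking signs, the direction of the induction, and the correct handling of the shift $l\mapsto l-1$ in the ratio identity. Nothing conceptually deeper should intervene once the ratio identity above has been recorded.
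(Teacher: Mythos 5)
Your proof is correct. Note first that the paper itself does not prove this lemma: it is imported verbatim by citation from \cite[(2.1), (2.2), (2.5)]{HPT:14}, so there is no in-paper argument to compare against, and a self-contained verification like yours is exactly what is needed. Your two ingredients both check out. The ratio identity $a_{n,l-1}/a_{n,l}=(n+l)/(n-l+1)$ for $a_{n,k}=\binom{n}{k}/\binom{n+k}{k}$ is correct, and with it the downward inductions for \eqref{part1} and \eqref{part2} close: for \eqref{part2} the increment of the right side is $(n-l+1)a_{n,l-1}-(n-l)a_{n,l}=\bigl[(n+l)-(n-l)\bigr]a_{n,l}=2l\,a_{n,l}$, and for \eqref{part1} it is $\tfrac{(-1)^l}{n}\bigl[(n+l)+(n-l)\bigr]a_{n,l}=2(-1)^l a_{n,l}$, both matching the telescoped left sides, with the $l=n$ boundary trivial. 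For \eqref{part3}, the beta representation $1/\binom{n+k}{k}=n\int_0^1 x^{n-1}(1-x)^k\,dx$ is valid, the double application of $y\,d/dy$ to $(1-y)^n$ gives $\sum_k(-1)^kk^2\binom{n}{k}y^k=-ny(1-y)^{n-1}+n(n-1)y^2(1-y)^{n-2}$, and the two beta integrals evaluate to $-n/(2(2n-1))$ and $+n/(2(2n-1))$ as you claim, cancelling precisely when $n\ge 2$ (and indeed the identity fails for $n=1$, where the sum equals $-1/2$, so your use of the hypothesis is in the right place). One could alternatively deduce \eqref{part3} from \eqref{part1} and \eqref{part2} by Abel summation over $l$, but your direct integral argument is clean and complete as it stands.
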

\begin{lemma} \label{L2}
For any positive integers $n, l$ and a non-negative integer $c$, we have
\begin{align}
\sum_{k=l}^n\frac{k\binom{n}{k}\,2^{\Delta(k,l)}}{\binom{n+k}{k}} &= \frac{n\binom{n}{l}}{\binom{n+l}{l}}, \label{Part1} \\
\sum_{k=l}^n\frac{(-1)^k\binom{n}{k}}{\binom{n+k}{k}}\,S^{\sharp}_{k,l}(\{1\}^c) &= \frac{(-1)^l}{n^{c+1}}\cdot\frac{l\binom{n}{l}}{\binom{n+l}{l}}. \label{Part2}
\end{align}
\end{lemma}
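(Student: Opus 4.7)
The plan is to prove (Part1) as a direct consequence of (part2) of Lemma~\ref{L1}, and to prove (Part2) by induction on $c$, with the inductive step reducing to the $c=0$ case via the natural recursion for $S^{\sharp}$; the $c=0$ case in turn follows from (part1) of Lemma~\ref{L1}.

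For (Part1), I would isolate the $k=l$ term, using the fact that $2^{\Delta(k,l)}$ equals $1$ for $k=l$ and $2$ otherwise, so that the left-hand side rewrites as
\[
\frac{l\binom{n}{l}}{\binom{n+l}{l}} + 2\sum_{k=l+1}^n \frac{k\binom{n}{k}}{\binom{n+k}{k}}.
\]
Applying (part2) of Lemma~\ref{L1} evaluates the tail to $(n-l)\binom{n}{l}/\binom{n+l}{l}$, and the two contributions combine telescopically to the claimed $n\binom{n}{l}/\binom{n+l}{l}$.

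For (Part2), I would induct on $c\ge 0$. In the base case $c=0$, where $S^{\sharp}_{k,l}(\emptyset)=2^{\Delta(k,l)}$, the same trick applies: peel off $k=l$ to get
\[
\frac{(-1)^l\binom{n}{l}}{\binom{n+l}{l}} + 2\sum_{k=l+1}^n \frac{(-1)^k\binom{n}{k}}{\binom{n+k}{k}},
\]
and apply (part1) of Lemma~\ref{L1} to the tail; the identity $1-(n-l)/n=l/n$ then delivers the required right-hand side. For the inductive step, the key observation is the ``outermost-index'' recursion
\[
S^{\sharp}_{k,l}(\{1\}^c) \;=\; \sum_{l_1=l}^k \frac{2^{\Delta(k,l_1)}}{l_1}\, S^{\sharp}_{l_1,l}(\{1\}^{c-1}),
\]
obtained by pulling out the largest summation variable in the definition of $S^{\sharp}$. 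Substituting this into the left-hand side of (Part2) and exchanging the order of summation leaves, in the inner position, a sum of exactly the form already handled by the base case, which evaluates to $\frac{(-1)^{l_1}}{n}\cdot\frac{l_1\binom{n}{l_1}}{\binom{n+l_1}{l_1}}$. The factors of $l_1$ cancel, and the surviving outer sum over $l_1$ is precisely the left-hand side of (Part2) at depth $c-1$, multiplied by $1/n$; invoking the induction hypothesis on it produces $\frac{(-1)^l}{n^{c+1}}\cdot\frac{l\binom{n}{l}}{\binom{n+l}{l}}$, as desired.

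The main obstacle is cosmetic rather than conceptual: one must verify that the endpoint conventions in the definition of $S^{\sharp}$ (the $k<m$ clause and the empty-$\mathbf{r}$ value $2^{\Delta(k,m)}$) are consistent with the recursion and with swapping the order of summation at the extreme values $l_1=l$ and $l_1=k$. Once that bookkeeping is in place, the entire argument is a clean two-step reduction to the two identities of Lemma~\ref{L1}.
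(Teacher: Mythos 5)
Your proposal is correct and follows essentially the same route as the paper: (\ref{Part1}) by peeling off the $k=l$ term and invoking (\ref{part2}), and (\ref{Part2}) by induction on $c$ with the base case from (\ref{part1}) and the inductive step via interchanging the order of summation — your ``outermost-index recursion'' for $S^{\sharp}$ is just a packaged form of the paper's direct swap of the sums over $k$ and $l_1,\ldots,l_c$. The bookkeeping you flag does check out, so no gap remains.
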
 
\begin{proof} The proof of (\ref{Part1}) easily follows from Lemma \ref{L1}, identity (\ref{part2}),
\begin{equation*}
\sum_{k=l}^n\frac{k\binom{n}{k}\,2^{\Delta(k,l)}}{\binom{n+k}{k}}=\frac{l\binom{n}{l}}{\binom{n+l}{l}}+2\sum_{k=l+1}^n\frac{k\binom{n}{k}}{\binom{n+k}{k}}
=\frac{l\binom{n}{l}}{\binom{n+l}{l}}+\frac{(n-l)\binom{n}{l}}{\binom{n+l}{l}}=\frac{n\binom{n}{l}}{\binom{n+l}{l}}.
\end{equation*}
To prove (\ref{Part2}), we apply induction on $c$. For $c=0$, $S^{\sharp}_{k,l}(\{1\}^c)=2^{\Delta(k,l)}$ and we have by (\ref{part1})
\begin{equation} \label{eq03}
\sum_{k=l}^n\frac{(-1)^k\binom{n}{k}\,2^{\Delta(k,l)}}{\binom{n+k}{k}} = \frac{(-1)^l\binom{n}{l}}{\binom{n+l}{l}} + 
2\sum_{k=l+1}^n\frac{(-1)^k\binom{n}{k}}{\binom{n+k}{k}}=
\frac{(-1)^l}{n}\cdot\frac{l\binom{n}{l}}{\binom{n+l}{l}}.
\end{equation}
If $c\ge 1$, then changing the order of summation and applying identity (\ref{eq03}), we obtain
\begin{equation*}
\begin{split}
\sum_{k=l}^n\frac{(-1)^k\binom{n}{k}}{\binom{n+k}{k}}\,S^{\sharp}_{k,l}(\{1\}^c) &=
\sum_{k=l}^n\frac{(-1)^k\binom{n}{k}}{\binom{n+k}{k}}\sum_{k\ge l_1\ge \cdots\ge l_c\ge l}\frac{2^{\Delta(k,l_1)+\Delta(l_1,l_2)+\cdots+\Delta(l_c,l)}}{l_1l_2\cdots l_c} \\
&=\sum_{n\ge l_1\ge \cdots l_c\ge l} \left(\sum_{k=l_1}^n\frac{(-1)^k\binom{n}{k}\,2^{\Delta(k,l_1)}}{\binom{n+k}{k}}\right) \frac{2^{\Delta(l_1,l_2)+
\cdots+\Delta(l_c,l)}}{l_1l_2\cdots l_c} \\
&=\sum_{n\ge l_1\ge \cdots l_c\ge l}\frac{(-1)^{l_1}}{n}\cdot\frac{l_1\binom{n}{l_1}}{\binom{n+l_1}{l_1}}
\cdot\frac{2^{\Delta(l_1,l_2)+\cdots+\Delta(l_c,l)}}{l_1l_2\cdots l_c} \\
&=\frac{1}{n}\sum_{l_1=l}^n\frac{(-1)^{l_1}\binom{n}{l_1}}{\binom{n+l_1}{l_1}}\, S^{\sharp}_{l_1,l}(\{1\}^{c-1}).
\end{split}
\end{equation*}
Now formula (\ref{Part2}) easily follows by induction on $c$.
\end{proof}
The proof of the next lemma was essentially given in detail in \cite[Lemma 4.2]{LZ:15}. We slightly modified the formulation  to embrace 
a more general class of series.
\begin{lemma}   \label{L3}
Let $M, c, a\in{\mathbb R}, M>0, a>1$, and let $R_k$ be a sequence of real numbers satisfying 
$|R_k|<\frac{M(\log k+1)^c}{k^a}$ for all $k=1,2,\ldots$. Then
$$
\lim_{n\to\infty}\sum_{k=1}^n|R_k|\left(1-\frac{\binom{n}{k}}{\binom{n+k}{k}}\right)=0.
$$
\end{lemma}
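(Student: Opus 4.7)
The plan is to exploit the fact that $\binom{n}{k}/\binom{n+k}{k} \to 1$ pointwise in $k$ as $n \to \infty$, together with the summability of $|R_k|$ coming from the bound $|R_k| < M(\log k+1)^c/k^a$ with $a>1$. This puts us in a dominated-convergence-style situation: the factors $1 - \binom{n}{k}/\binom{n+k}{k}$ lie in $[0,1]$, so $|R_k|$ itself is a summable dominator independent of $n$. The pointwise limit of each summand is zero, so after showing absolute summability one expects the sum to tend to zero.

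Concretely, I would first observe the product representation
\[
\frac{\binom{n}{k}}{\binom{n+k}{k}} \;=\; \frac{(n!)^2}{(n-k)!\,(n+k)!} \;=\; \prod_{j=1}^{k}\left(1-\frac{2j-1}{n+j}\right),
\]
valid for $1\le k\le n$. From this it is immediate that $0\le 1-\binom{n}{k}/\binom{n+k}{k}\le 1$, and that for any fixed $k$ the factor tends to $0$ as $n\to\infty$. Next, since $a>1$, the series $\sum_{k\ge 1}|R_k|$ converges; given $\varepsilon>0$, choose a fixed $N=N(\varepsilon)$ so that $\sum_{k>N}|R_k|<\varepsilon/2$.

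Then split the sum as
\[
\sum_{k=1}^n |R_k|\!\left(1-\frac{\binom{n}{k}}{\binom{n+k}{k}}\right)
= \sum_{k=1}^{N} + \sum_{k=N+1}^{n}.
\]
The tail is bounded by $\sum_{k>N}|R_k|<\varepsilon/2$, uniformly in $n$. The head is a finite sum (only $N$ terms, with $N$ fixed) in which each summand goes to $0$ as $n\to\infty$; hence it is less than $\varepsilon/2$ for all sufficiently large $n$. Combining these two estimates yields the claim.

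I do not expect any real obstacle: the only place that requires a small amount of care is to make the $O(k/n)$-type estimate for the head quantitative enough to see that each of the finitely many terms really does vanish in the limit. Using the product formula above, for $k\le N$ one has $1-\binom{n}{k}/\binom{n+k}{k} \le \sum_{j=1}^{k}\frac{2j-1}{n+j} \le \frac{N^2}{n-N}$, which makes the head estimate entirely elementary once $N$ is fixed.
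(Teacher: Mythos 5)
Your proof is correct. The paper does not reproduce an argument for this lemma but defers to \cite[Lemma 4.2]{LZ:15}, whose proof rests on the same two ingredients you use --- the bound $0\le 1-\binom{n}{k}/\binom{n+k}{k}\le 1$ together with a quantitative $O(k^2/n)$ estimate obtained from the product representation, combined with the summability of $|R_k|$ (which holds for every $c$ since $a>1$) via a head/tail split --- so your self-contained dominated-convergence-style argument is essentially the intended one.
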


\section{Generating Functions For Multiple Harmonic Sums}

In this section, we prove  a finite version of identity (\ref{mainth}), from which Theorem \ref{T2} will follow by limit transition.
For any $n, m \in{\mathbb N}$ and
${\bf s}=(s_1,\ldots, s_m)\in{\mathbb D}^m$,  we  define the (alternating) multiple harmonic sums by 
\begin{align*} 
H_n({\bf s})=\sum_{n\ge k_1>\cdots>k_m\ge 1}\prod_{j=1}^m\frac{({\rm sgn}(s_j))^{k_j}}{k_j^{|s_j|}}  \quad\text{and}\quad
H_n^{\star}({\bf s})=\sum_{n\ge k_1\ge\cdots\ge k_m\ge 1}\prod_{j=1}^m\frac{({\rm sgn}(s_j))^{k_j}}{k_j^{|s_j|}}.
\end{align*}
By convention, we put $H_n({\bf s})=0$ if $n<m$, and $H_n(\emptyset)=H_n^\star(\emptyset)=1$.

\noindent Let $F_n(c_1, \ldots, c_d; z_0, z_1, \ldots, z_d)$ denote the generating function of the multiple harmonic star sum,
\begin{equation*} 
F_n(c_1, \ldots, c_d; z_0, z_1, \ldots, z_d)=
\sum_{a_0, a_1, \ldots, a_d\ge 0}H_n^{\star}(\{2\}^{a_0}, c_1, \{2\}^{a_1},  \ldots, c_d, \{2\}^{a_d}) \,z_0^{2a_0}z_1^{2a_1}\cdots z_d^{2a_d}.
\end{equation*}
 
\begin{theorem} \label{T1}
For any integers $n\ge 1$, $d\ge 0$, $c_1, \ldots, c_d\in{\mathbb N}\setminus{\{2\}}$, and any complex numbers $z_0, z_1, \ldots, z_d$ with $|z_j|<1, j=0, 1,\ldots, d$,
we have
\begin{equation} \label{T1part1}
\begin{split}
&F_n(c_1, \ldots, c_d; z_0, z_1, \ldots, z_d) \\
&\qquad\qquad\quad=-\sum_{n\ge k_0\ge k_1\ge\cdots\ge k_d\ge 1}\frac{\binom{n}{k_0}}{\binom{n+k_0}{k_0}} \prod_{i=0}^d
\frac{(-1)^{k_i\delta_i}k_i^{\delta_i-1}}{k_i^2-z_i^2}\,S^{\sharp}_{k_{i-1}, k_i}(\{1\}^{c_i-3}),
\end{split}
\end{equation}
where $c_0=1, c_{d+1}=0, k_{-1}=0$, $\delta_i=\delta(c_i)+\delta(c_{i+1})$, and
$\delta(c)=\begin{cases}
2, &\quad\text{if}\,\, c=0; \\
1, &\quad\text{if}\,\, c=1; \\
0, &\quad\text{if}\,\, c\ge 3.
\end{cases}
$
\end{theorem}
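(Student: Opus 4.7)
The approach is to reformulate $F_n$ in terms of the values taken by the harmonic-sum index at each non-two entry $c_i$, then apply partial fraction decomposition to the resulting blocks, and finally sum out the auxiliary variables using Lemma~\ref{L2}.

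Let $m_i$ denote the value of the index at the position of $c_i$ (for $1 \le i \le d$), with conventions $m_0 = n$ and $m_{d+1} = 1$. Between consecutive non-two entries, the block $\{2\}^{a_i}$ consists of indices in $[m_{i+1}, m_i]$, and summing over $a_i \ge 0$ via the generating function of complete homogeneous symmetric polynomials gives the factor $\Phi_{m_i, m_{i+1}}(z_i) := \prod_{j = m_{i+1}}^{m_i} j^2/(j^2 - z_i^2)$. Thus
\[
F_n(c_1, \ldots, c_d; z_0, \ldots, z_d) = \sum_{n \ge m_1 \ge \cdots \ge m_d \ge 1} \prod_{i=1}^d \frac{1}{m_i^{c_i}} \prod_{i=0}^d \Phi_{m_i, m_{i+1}}(z_i).
\]
As a warm-up, the case $d = 0$ reads $F_n(\,;z_0) = \prod_{j=1}^n j^2/(j^2 - z_0^2)$, whose partial fraction decomposition in $z_0^2$ gives exactly $-2\sum_{k_0=1}^n (-1)^{k_0} k_0^2 \binom{n}{k_0}/\bigl[(k_0^2 - z_0^2) \binom{n+k_0}{k_0}\bigr]$, matching the right-hand side of~\eqref{T1part1} at $d = 0$ (after unpacking the conventions $\delta_0 = 3$ and $S^\sharp_{0, k_0}(\emptyset) = 2$).

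For general $d$, apply partial fraction decomposition in $z_i^2$ to each $\Phi_{m_i, m_{i+1}}(z_i)$, introducing a new variable $k_i \in [m_{i+1}, m_i]$. Interchange the order of summation: the outer sum now runs over $n \ge k_0 \ge k_1 \ge \cdots \ge k_d \ge 1$ while the inner variables $m_i$ satisfy $k_i \le m_i \le k_{i-1}$. Perform the inner summations one at a time using Lemma~\ref{L2}, specifically~\eqref{Part2}: this identity transforms the combination of the partial-fraction coefficients and the factor $1/m_i^{c_i}$ into the sharp sum $S^\sharp_{k_{i-1}, k_i}(\{1\}^{c_i - 3})$, together with the sign $(-1)^{k_i \delta_i}$ and the power $k_i^{\delta_i - 1}$ prescribed by the neighboring values of the $c_j$'s. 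The outer binomial factor $\binom{n}{k_0}/\binom{n+k_0}{k_0}$ in~\eqref{T1part1} emerges from the outermost summation over $m_1$, since $\Phi_{n, m_1}(z_0)$ is the only factor whose partial-fraction coefficients depend on $n$.

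The hardest part will be the combinatorial bookkeeping in the inner summations. The partial fraction coefficients of $\Phi_{A, B}(z)$ for $B \ge 2$ have a more intricate form than in the $B = 1$ case, involving ratios like $(m + k - 1)!/[((m-1)!)^2 (k - m)!]$; one needs to verify that these combine with the factor $1/m_i^{c_i}$ through successive applications of~\eqref{Part2} to produce exactly the sharp-sum structure of length $c_i - 3$, with the correct weight $2^{\Delta(k_{i-1}, k_i)}$ and nothing more. In the smallest nontrivial case ($d = 1$, $c_1 = 3$), the computation reduces to the $c = 0$ instance of~\eqref{Part2} via the binomial identity $\sum_{m=k_1}^{k_0} (-1)^m (m + k_0 - 1)!/\bigl[m (k_0 - m)!\, (m - k_1)!\, (m + k_1)!\bigr] = (-1)^{k_1} 2^{\Delta(k_0, k_1)}/(2 k_0 k_1)$, and I expect the general case to unfold level by level in the same fashion.
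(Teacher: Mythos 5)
Your opening reduction is sound: rewriting $F_n$ as a sum over the positions $m_1\ge\cdots\ge m_d$ of the entries $c_1,\ldots,c_d$ with the factors $\Phi_{m_i,m_{i+1}}(z_i)=\prod_{j=m_{i+1}}^{m_i}j^2/(j^2-z_i^2)$ is exactly identity (\ref{eq11}) of the paper, your $d=0$ partial-fraction computation is correct, and your binomial identity for the case $d=1$, $c_1=3$ is true and does yield the claimed formula there. But this is a genuinely different route from the paper's (which proves (\ref{T1part1}) by induction on $n+d$ via the recurrence (\ref{rec}), obtained by splitting off the terms of $H_n^\star$ with leading index equal to $n$, and then collapses the resulting sum over $k_0$ with Lemma \ref{L2}), and your route has a real gap at its central step.

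The gap: Lemma \ref{L2}, (\ref{Part2}) cannot perform the inner summations you describe. After partial-fractioning each $\Phi_{m_i,m_{i+1}}(z_i)$ and interchanging sums, the sum over $m_i$ carries the kernel $\binom{m_i}{k_i}/\binom{m_i+k_i}{k_i}$ with $m_i$ (the \emph{upper} parameter) as the summation variable and $k_i$ fixed, whereas (\ref{Part2}) sums over the lower parameter $k$ with $n$ fixed; moreover (\ref{Part2}) \emph{consumes} a sharp sum already present in the summand and returns an elementary expression, while your argument needs the reverse: the sharp sum $S^{\sharp}_{k_{i-1},k_i}(\{1\}^{c_i-3})$ must be \emph{created} out of the single factor $1/m_i^{c_i}$ and the two adjacent partial-fraction coefficients. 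What is actually required is a new family of identities, roughly an evaluation of
$\sum_{m=k_i}^{k_{i-1}}\frac{(-1)^m(m+k_{i-1}-1)!}{m^{c_i-2}\,(k_{i-1}-m)!\,(m-k_i)!\,(m+k_i)!}$
in terms of $S^{\sharp}_{k_{i-1},k_i}(\{1\}^{c_i-3})$ for every $c_i\ge 3$ (plus an analogous statement for $c_i=1$); the identity you verified is only the instance $c_i=3$, where the sharp sum degenerates to $2^{\Delta(k_{i-1},k_i)}$. You neither state nor prove the general family --- ``I expect the general case to unfold level by level in the same fashion'' is precisely where the content of the theorem lives in this approach. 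One structural point in your favour, worth making explicit if you pursue this: the partial-fraction coefficient $C_k^{(A,B)}$ factors as a function of $(A,k)$ times a function of $(B,k)$, so the sums over $m_1,\ldots,m_d$ genuinely decouple into independent single sums once the $k_i$ are fixed; but each of those single sums still needs the missing identity, which is most naturally proved by the very induction the paper runs on the whole statement.
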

\begin{proof} If $n=1$, the theorem is obviously true. We have
\begin{equation*}
F_1(c_1, \ldots, c_d; z_0, z_1, \ldots, z_d)
=\sum_{a_0, a_1, \ldots, a_d\ge 0}z_0^{2a_0}z_1^{2a_1}\cdots z_d^{2a_d}=\prod_{j=0}^d\frac{1}{1-z_j^2},
\end{equation*}
and the right-hand side of (\ref{T1part1}) is
\begin{equation*}
-\frac{\binom{1}{1}}{\binom{2}{1}}\left(\prod_{i=0}^d\frac{(-1)^{\delta_i}}{1-z_i^2}\right) 2^{\Delta(0,1)}=(-1)^{1+\delta_0+\delta_1+\cdots+\delta_d}\prod_{i=0}^d
\frac{1}{1-z_i^2}=\prod_{i=0}^d
\frac{1}{1-z_i^2}.
\end{equation*}
If $d=0$, the formula becomes
\begin{equation}
F_n(\, ; z_0)=\sum_{a_0\ge 0}H_n^\star(\{2\}^{a_0})\,z_0^{2a_0}=-2\sum_{n\ge k_0\ge 1}\frac{\binom{n}{k_0}}{\binom{n+k_0}{k_0}}\cdot\frac{(-1)^{k_0}\,k_0^2}{k_0^2-z_0^2}.
\label{d0}
\end{equation}
Then it is easy to see that
\begin{equation*}
\begin{split}
F_n(\, ; z_0)&=\sum_{a_0\ge 0}H_n^\star(\{2\}^{a_0})\,z_0^{2a_0}=\sum_{a_0=0}^{\infty}z_0^{2a_0}\sum_{k=0}^{a_0}\frac{1}{n^{2(a_0-k)}}H_{n-1}^\star(\{2\}^k)\\
&=\sum_{k=0}^{\infty}H_{n-1}^\star(\{2\}^k)z_0^{2k}\sum_{a_0=k}^{\infty}\frac{z^{2(a_0-k)}}{n^{2(a_0-k)}}=\frac{n^2}{n^2-z^2}F_{n-1}(\, ; z_0)
\end{split}
\end{equation*}
and (\ref{d0}) follows immediately by induction on $n$. Indeed, we have
\begin{equation*}
\begin{split}
F_n(\, ; z_0)&=\frac{n^2}{n^2-z_0^2}\,F_{n-1}(\, ; z_0)=\frac{-2n^2}{n^2-z_0^2}\sum_{ k=1}^{n-1}\frac{\binom{n-1}{k}}{\binom{n-1+k}{k}}\frac{(-1)^kk^2}{k^2-z_0^2} \\
&=2\sum_{k=1}^n\frac{(-1)^k k^2}{k^2-z_0^2}\frac{\binom{n}{k}}{\binom{n+k}{k}}\!\left(\frac{k^2-z_0^2}{n^2-z_0^2}-1\right) 
=-2\sum_{k=1}^n\frac{(-1)^kk^2}{k^2-z_0^2}\frac{\binom{n}{k}}{\binom{n+k}{k}},
\end{split}
\end{equation*}
where in the last equality we used identity (\ref{part3}).

For the general case $n>1$ and $d>0$, we proceed
by induction on $n+d$. When $n+d=1$ or 2 the formula is true by the above.
Let $N$ be a positive integer. Suppose (\ref{T1part1}) is true for all $n+d\le N$.  To prove it for $n+d=N+1$ with $n>1$ and $d>0$, we consider
the expansion of the finite sum
\begin{equation*}
\begin{split}
H_n^\star(\{2\}^{a_0}, c_1, \{2\}^{a_1}, \ldots, c_d, \{2\}^{a_d})&=\frac{1}{n^{2a_0+c_1}}\,H_n^\star(\{2\}^{a_1}, c_2, \{2\}^{a_2}, \ldots, c_d, \{2\}^{a_d}) \\
&+\sum_{k=0}^{a_0}\frac{1}{n^{2(a_0-k)}}\,H_{n-1}^\star(\{2\}^k, c_1, \{2\}^{a_1}, \ldots, c_d, \{2\}^{a_d}),
\end{split}
\end{equation*}
which leads to the following reduction of the generating function:
\begin{equation*}
\begin{split}
F_n(c_1, \ldots, c_d; &\, z_0, z_1, \ldots, z_d)=\sum_{a_0\ge 0}\frac{z_0^{2a_0}}{n^{2a_0+c_1}}\cdot F_n(c_2, \ldots, c_d; z_1, \ldots, z_d) \\
&+\sum_{a_0, a_1, \ldots, a_d\ge 0}\sum_{k=0}^{a_0}\frac{1}{n^{2(a_0-k)}}\,H_{n-1}^\star(\{2\}^k, c_1, \{2\}^{a_1}, \ldots, c_d, \{2\}^{a_d})\,z_0^{2a_0}z_1^{2a_1}
\cdots z_d^{2a_d}.
\end{split}
\end{equation*}
Changing the order of summation in the second sum and simplifying, we get
\begin{equation*}
\begin{split}
\sum_{a_0, a_1, \ldots, a_d\ge 0}&\sum_{k=0}^{a_0}\frac{1}{n^{2(a_0-k)}}\,H_{n-1}^\star(\{2\}^k, c_1, \{2\}^{a_1}, \ldots, c_d, \{2\}^{a_d})\,z_0^{2a_0}z_1^{2a_1}
\cdots z_d^{2a_d} \\
&=\sum_{k=0}^{\infty}\sum_{a_1,\ldots, a_d\ge 0}H_{n-1}^\star(\{2\}^k, c_1, \{2\}^{a_1}, \ldots, c_d, \{2\}^{a_d})z_0^{2k}z_1^{2a_1}\cdots z_d^{2a_d}\cdot
\sum_{a_0=k}^{\infty}\frac{z_0^{2(a_0-k)}}{n^{2(a_0-k)}} \\
&=\frac{n^2}{n^2-z_0^2}\cdot F_{n-1}(c_1, \ldots, c_d;  z_0, z_1, \ldots, z_d)
\end{split}
\end{equation*}
and therefore,
\begin{equation} \label{rec}
\begin{split}
F_n(c_1, \ldots, c_d; z_0, z_1, \ldots, z_d)&=\frac{n^{2-c_1}}{n^2-z_0^2}\cdot F_n(c_2, \ldots, c_d; z_1, \ldots, z_d) \\
&+ \frac{n^2}{n^2-z_0^2}\cdot F_{n-1}(c_1, \ldots, c_d; z_0, z_1, \ldots, z_d).
\end{split}
\end{equation}
Let $\Sigma_0$ denote the right-hand side of (\ref{T1part1}). Consider the difference $F_n(c_1, \ldots, c_d; z_0, z_1, \ldots, z_d)-\Sigma_0$
and evaluate $\frac{n^2}{n^2-z_0^2}\,F_{n-1}(c_1, \ldots, c_d; z_0, z_1, \ldots, z_d)-\Sigma_0$. 
By the induction hypothesis for $(n-1)+d=n+(d-1)=N$, we have
\begin{equation*}
\begin{split}
\frac{n^2}{n^2-z_0^2}\,&F_{n-1}(c_1, \ldots, c_d; z_0, z_1, \ldots, z_d)-\Sigma_0 \\
&= \frac{-n^2}{n^2-z_0^2}\sum_{n-1\ge k_0\ge k_1\ge \cdots\ge k_d\ge 1}
\frac{\binom{n-1}{k_0}}{\binom{n-1+k_0}{k_0}}\,\prod_{i=0}^d \frac{(-1)^{k_i\delta_i}k_i^{\delta_i-1}}{k_i^2-z_i^2} S^{\sharp}_{k_{i-1}, k_i}(\{1\}^{c_i-3}) \\
&\,+ \sum_{n\ge k_0\ge k_1\ge\cdots\ge k_d\ge 1}\frac{\binom{n}{k_0}}{\binom{n+k_0}{k_0}} \prod_{i=0}^d
\frac{(-1)^{k_i\delta_i}k_i^{\delta_i-1}}{k_i^2-z_i^2}\,S^{\sharp}_{k_{i-1}, k_i}(\{1\}^{c_i-3}) \\
&=\sum_{n\ge k_0\ge k_1\ge\cdots\ge k_d\ge 1}\prod_{i=0}^d
\frac{(-1)^{k_i\delta_i}k_i^{\delta_i-1}}{k_i^2-z_i^2}\,S^{\sharp}_{k_{i-1}, k_i}(\{1\}^{c_i-3})
\left(\frac{\binom{n}{k_0}}{\binom{n+k_0}{k_0}} - \frac{n^2}{n^2-z_0^2}\cdot\frac{\binom{n-1}{k_0}}{\binom{n-1+k_0}{k_0}}\right). 
\end{split}
\end{equation*}
Notice that the expression in the parenthesis simplifies to
$$
\frac{\binom{n}{k_0}}{\binom{n+k_0}{k_0}} - \frac{n^2}{n^2-z_0^2}\cdot\frac{\binom{n-1}{k_0}}{\binom{n-1+k_0}{k_0}}
=\frac{\binom{n}{k_0}}{\binom{n+k_0}{k_0}}\cdot \frac{k_0^2-z_0^2}{n^2-z_0^2}
$$
and therefore,
\begin{equation} \label{eq04}
\begin{split}
&\frac{n^2}{n^2-z_0^2}\,F_{n-1}(c_1, \ldots, c_d; z_0, z_1, \ldots, z_d)-\Sigma_0 \\
&\quad= \frac{2}{n^2-z_0^2}\sum_{n\ge k_0\ge k_1\ge\cdots\ge k_d\ge 1}\frac{\binom{n}{k_0}}{\binom{n+k_0}{k_0}}
(-1)^{k_0(1+\delta(c_1))} k_0^{\delta(c_1)} \prod_{i=1}^d\frac{(-1)^{k_i\delta_i} k_i^{\delta_i-1}}{k_i^2-z_i^2}\, S^{\sharp}_{k_{i-1}, k_i}(\{1\}^{c_i-3}).
\end{split}
\end{equation}
Now let us evaluate the inner single sum over $k_0$ in (\ref{eq04}), which is
$$
\Sigma_{k_0}:=\sum_{k_0=k_1}^n\frac{\binom{n}{k_0}}{\binom{n+k_0}{k_0}}\,(-1)^{k_0(1+\delta(c_1))}\cdot k_0^{\delta(c_1)}\cdot S^{\sharp}_{k_0, k_1}(\{1\}^{c_1-3}).
$$
If $c_1=1$, then $\delta(c_1)=1$ and we get by Lemma \ref{L2}, (\ref{Part1}),
$$
\Sigma_{k_0}=\sum_{k_0=k_1}^n\frac{\binom{n}{k_0}}{\binom{n+k_0}{k_0}}\cdot k_0\cdot 2^{\Delta(k_0,k_1)} = \frac{n\binom{n}{k_1}}{\binom{n+k_1}{k_1}},
$$
which implies
\begin{equation*} 
\begin{split}
&\frac{n^2}{n^2-z_0^2}\,F_{n-1}(c_1, \ldots, c_d; z_0, z_1, \ldots, z_d)-\Sigma_0 \\
&\quad= \frac{2n}{n^2-z_0^2}\sum_{n\ge k_1\ge\cdots\ge k_d\ge 1}\frac{\binom{n}{k_1}}{\binom{n+k_1}{k_1}}
\frac{(-1)^{k_1(1+\delta(c_2))} k_1^{\delta(c_2)}}{k_1^2-z_1^2} \prod_{i=2}^d\frac{(-1)^{k_i\delta_i} k_i^{\delta_i-1}}{k_i^2-z_i^2}\, S^{\sharp}_{k_{i-1}, k_i}(\{1\}^{c_i-3}) \\
&\quad=\frac{-n}{n^2-z_0^2}\cdot F_n(c_2, \ldots, c_d; z_1, \ldots, z_d),
\end{split}
\end{equation*}
where in the last equality we applied the induction hypothesis for $n+(d-1)=N$. Now from the above and recurrence (\ref{rec}) we conclude that
$F_n(c_1, \ldots, c_d; z_0, z_1, \ldots, z_d)=\Sigma_0$ and therefore,  the theorem is proved  in this case.

If $c_1\ge 3$, then $\delta(c_1)=0$ and by Lemma \ref{L2}, (\ref{Part2}), we obtain
$$
\Sigma_{k_0}=\sum_{k_0=k_1}^n\frac{\binom{n}{k_0}}{\binom{n+k_0}{k_0}} (-1)^{k_0}\cdot S^{\sharp}_{k_0, k_1}(\{1\}^{c_1-3})
=\frac{(-1)^{k_1}\cdot k_1}{n^{c_1-2}}\cdot \frac{\binom{n}{k_1}}{\binom{n+k_1}{k_1}},
$$
which implies
\begin{equation*} 
\begin{split}
&\frac{n^2}{n^2-z_0^2}\,F_{n-1}(c_1, \ldots, c_d; z_0, z_1, \ldots, z_d)-\Sigma_0 \\
&\quad= \frac{2}{n^{c_1-2}}\cdot\frac{1}{n^2-z_0^2}\sum_{n\ge k_1\ge\cdots\ge k_d\ge 1}\frac{\binom{n}{k_1}}{\binom{n+k_1}{k_1}}
\frac{(-1)^{k_1}\cdot k_1\cdot(-1)^{k_1\delta(c_2)} k_1^{\delta(c_2)-1}}{k_1^2-z_1^2} \prod_{i=2}^d\frac{(-1)^{k_i\delta_i} k_i^{\delta_i-1}}{k_i^2-z_i^2} \\
&\quad\times S^{\sharp}_{k_{i-1}, k_i}(\{1\}^{c_i-3}) 
=\frac{-n^{2-c_1}}{n^2-z_0^2}\cdot F_n(c_2, \ldots, c_d; z_1, \ldots, z_d),
\end{split}
\end{equation*}
and therefore, the proof is complete.
\end{proof}

\begin{corollary} \label{C1}
For any integers $n\ge 1$, $d\ge 0$, $c_1, \ldots, c_d\in \{1, 3\}$, and any complex numbers $z_0, z_1, \ldots, z_d$ with $|z_j|<1, j=0, 1,\ldots, d$,
we have
\begin{equation*}
F_n(c_1, \ldots, c_d; z_0, z_1, \ldots, z_d)
=-\sum_{n\ge k_0\ge k_1\ge\cdots\ge k_d\ge 1}\frac{\binom{n}{k_0}}{\binom{n+k_0}{k_0}} \prod_{i=0}^d
\frac{(-1)^{k_i\delta_i}k_i^{\delta_i-1}}{k_i^2-z_i^2}\,2^{\Delta(k_{i-1}, k_i)}
\end{equation*}
with the same notation as in Theorem {\rm\ref{T1}}.
\end{corollary}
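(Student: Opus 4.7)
The plan is to derive Corollary~\ref{C1} as an immediate specialization of Theorem~\ref{T1}. Under the additional hypothesis $c_1, \ldots, c_d \in \{1, 3\}$, each exponent $c_i - 3$ appearing in the sharp-sum factor $S^{\sharp}_{k_{i-1}, k_i}(\{1\}^{c_i-3})$ is non-positive: it equals $0$ when $c_i = 3$ and $-2$ when $c_i = 1$. The same holds at $i = 0$ by the boundary convention $c_0 = 1$.

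First I would verify that each such sharp-sum factor collapses to the scalar $2^{\Delta(k_{i-1}, k_i)}$. When $c_i = 3$ the string $\{1\}^{c_i - 3}$ is empty, so the $\mathbf{r} = \emptyset$ branch of the definition of $S^{\sharp}$ (given just before Theorem~\ref{T2}) immediately yields $S^{\sharp}_{k_{i-1}, k_i}(\emptyset) = 2^{\Delta(k_{i-1}, k_i)}$. When $c_i = 1$ the string $\{1\}^{-2}$ requires interpretation; the internally consistent reading — the very one used inside the proof of Theorem~\ref{T1}, where in the $c_1 = 1$ branch $S^{\sharp}_{k_0, k_1}(\{1\}^{c_1-3})$ is evaluated as $2^{\Delta(k_0, k_1)}$ — is to treat it as the empty string and apply the same empty-tuple rule.

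Substituting $S^{\sharp}_{k_{i-1}, k_i}(\{1\}^{c_i-3}) = 2^{\Delta(k_{i-1}, k_i)}$ in every factor of the product on the right-hand side of~(\ref{T1part1}) turns that identity into exactly the expression claimed in Corollary~\ref{C1}; the conventions $c_0 = 1$, $c_{d+1} = 0$, $k_{-1} = 0$, and $\delta_i = \delta(c_i) + \delta(c_{i+1})$ carry over verbatim, and no further combinatorial manipulation is needed. There is no real obstacle here: the corollary is purely a notational simplification of the parent theorem, and the only step worth mentioning explicitly is the empty-string reading of $\{1\}^{c_i-3}$ whenever $c_i \le 3$.
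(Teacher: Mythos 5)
Your proof is correct and follows exactly the route the paper intends: Corollary \ref{C1} is a pure specialization of Theorem \ref{T1}, with each factor $S^{\sharp}_{k_{i-1},k_i}(\{1\}^{c_i-3})$ collapsing to $2^{\Delta(k_{i-1},k_i)}$ via the empty-string branch of the definition of $S^{\sharp}$. Your reading of $\{1\}^{c_i-3}$ as empty when $c_i=1$ is indeed the convention the paper uses (it is made explicit in the final sentence of Theorem \ref{CZ4} and in the $c_1=1$ branch of the proof of Theorem \ref{T1}), so nothing further is needed.
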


\begin{corollary} \label{C2}
For any integers $n\ge 1$, $d\ge 0$, $0\le m\le d$, $c_1, \ldots, c_d\in{\mathbb N}\setminus{\{2\}}$, and any complex numbers $z_0, z_1, \ldots, z_d$ 
with $|z_j|<1, j=0, 1,\ldots, d$,
we have
\begin{equation*} 
\begin{split}
\underset{a_m\ge 1}{\sum_{a_0, a_1, \ldots, a_d\ge 0}}&H_n^{\star}(\{2\}^{a_0}, c_1, \{2\}^{a_1},  \ldots, c_d, \{2\}^{a_d}) \,z_0^{2a_0}z_1^{2a_1}\cdots z_d^{2a_d} \\
&=-z_m^2\sum_{n\ge k_0\ge k_1\ge\cdots\ge k_d\ge 1}\frac{\binom{n}{k_0}}{\binom{n+k_0}{k_0}}\, \frac{1}{k_m^2}\,\prod_{i=0}^d
\frac{(-1)^{k_i\delta_i}k_i^{\delta_i-1}}{k_i^2-z_i^2}\,S^{\sharp}_{k_{i-1}, k_i}(\{1\}^{c_i-3})
\end{split}
\end{equation*}
with the same notation as in Theorem {\rm \ref{T1}}.
\end{corollary}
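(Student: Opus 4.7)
The plan is to derive Corollary~\ref{C2} from Theorem~\ref{T1} by subtracting off the $a_m=0$ contribution. First I would observe that specializing $z_m\to 0$ in the absolutely convergent power series defining $F_n$ retains exactly those terms with $a_m=0$, since every other term carries a positive power of $z_m$. Hence the restricted sum on the left-hand side of Corollary~\ref{C2} can be written as the difference
\begin{equation*}
F_n(c_1,\ldots,c_d;z_0,\ldots,z_d)-F_n(c_1,\ldots,c_d;z_0,\ldots,z_{m-1},0,z_{m+1},\ldots,z_d).
\end{equation*}

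Next I would apply Theorem~\ref{T1} to both terms. Since only $z_m$ changes, the outer summation $\sum_{n\ge k_0\ge\cdots\ge k_d\ge 1}$ and every factor of the summand except $1/(k_m^2-z_m^2)$ are identical in the two formulas, and the specialization $z_m=0$ replaces this factor by $1/k_m^2$. Using the partial-fraction identity
\begin{equation*}
\frac{1}{k_m^2-z_m^2}-\frac{1}{k_m^2}=\frac{z_m^2}{k_m^2(k_m^2-z_m^2)},
\end{equation*}
which may be inserted termwise inside the finite outer sum since $|z_m|<1\le k_m$, and then factoring out $z_m^2$ together with the new factor $1/k_m^2$ from the summand, I would recover exactly the right-hand side of Corollary~\ref{C2}.

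I expect no step to present a genuine obstacle: this is essentially a one-line coefficient manipulation once Theorem~\ref{T1} is invoked. The only point to check is the legitimacy of specializing $z_m=0$ to isolate the $a_m=0$ terms, which is immediate from the absolute convergence of the multivariable power series under the hypothesis $|z_j|<1$.
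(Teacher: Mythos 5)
Your proposal is correct and follows exactly the paper's own route: the paper likewise writes the restricted sum as $F_n(c_1,\ldots,c_d;z_0,\ldots,z_d)-F_n(c_1,\ldots,c_d;z_0,\ldots,z_{m-1},0,z_{m+1},\ldots,z_d)$ and then applies Theorem \ref{T1} to both terms, with your partial-fraction step being the (omitted but implicit) final simplification.
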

\begin{proof}
The required formula easily follows from Theorem \ref{T1} and the relation
\begin{equation} \label{ge1}
\begin{split}
&\underset{a_m\ge 1}{\sum_{a_0, a_1, \ldots, a_d\ge 0}}H_n^{\star}(\{2\}^{a_0}, c_1, \{2\}^{a_1},  \ldots, c_d, \{2\}^{a_d}) \,z_0^{2a_0}z_1^{2a_1}\cdots z_d^{2a_d} \\
&\qquad\qquad=F_n(c_1, \ldots, c_d; z_0, z_1, \ldots, z_d)-F_n(c_1, \ldots, c_d; z_0, z_1, \ldots, z_{m-1}, 0, z_{m+1}, \ldots, z_d).
\end{split}
\end{equation}
\end{proof}

\begin{corollary} \label{C3}
For any integers $n\ge 1$, $d\ge 0$,  $c_1, \ldots, c_{d}\in{\mathbb N}\setminus{\{2\}}$, and any complex numbers $z_0,  \ldots, z_{d}$ 
with $|z_j|<1, j=0, 1,\ldots, d$,
we have
\begin{equation} \label{eq06} 
\begin{split}
\sum_{a_0, a_1, \ldots, a_{d}\ge 0}&H_n^{\star}(\{2\}^{a_0}, c_1, \{2\}^{a_1},  \ldots, c_{d}, \{2\}^{a_{d}}, 1) \,z_0^{2a_0}z_1^{2a_1}\cdots z_{d}^{2a_{d}} \\
&=\sum_{n\ge k_0\ge k_1\ge\cdots\ge k_{d}\ge 1}\frac{\binom{n}{k_0}}{\binom{n+k_0}{k_0}} \prod_{i=0}^{d}
\frac{(-1)^{k_i\delta_i}k_i^{\delta_i-1}}{k_i^2-z_i^2}\,S^{\sharp}_{k_{i-1}, k_i}(\{1\}^{c_i-3}),
\end{split}
\end{equation}
where $c_0=c_{d+1}=1$, $k_{-1}=0$, $\delta_i=\delta(c_i)+\delta(c_{i+1})$, and $\delta(c)$
is defined  in Theorem {\rm \ref{T1}}.
\end{corollary}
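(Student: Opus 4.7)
The plan is to deduce Corollary~\ref{C3} from Theorem~\ref{T1} by appending an auxiliary trailing block and specializing one variable to zero. Concretely, I would introduce the auxiliary parameters $c_{d+1} := 1$ and $z_{d+1} := 0$, and apply Theorem~\ref{T1} with $d$ replaced by $d+1$ to $F_n(c_1, \ldots, c_d, 1;\, z_0, z_1, \ldots, z_d, 0)$.

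First, I would check that the left-hand side reduces correctly. Since $0^{2a_{d+1}}$ vanishes unless $a_{d+1}=0$ (in which case $0^0 = 1$), the generating function $F_n(c_1, \ldots, c_d, 1;\, z_0, z_1, \ldots, z_d, 0)$ collapses to exactly the left-hand side of~(\ref{eq06}), namely the generating function of $H_n^\star(\{2\}^{a_0}, c_1, \{2\}^{a_1}, \ldots, c_d, \{2\}^{a_d}, 1)$.

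Next, on the right-hand side coming from Theorem~\ref{T1}, I would peel off the factor at index $i=d+1$. With the Theorem~\ref{T1} convention $c_{d+2}=0$, one computes $\delta_{d+1} = \delta(1) + \delta(0) = 1+2 = 3$, and since $c_{d+1}-3=-2$ the sharp sum $S^{\sharp}_{k_d, k_{d+1}}(\{1\}^{-2})$ collapses to $2^{\Delta(k_d, k_{d+1})}$ by the empty-string convention in the definition of $S^{\sharp}$. Plugging $z_{d+1}=0$, the full $i=d+1$ factor simplifies to $(-1)^{k_{d+1}}\,2^{\Delta(k_d, k_{d+1})}$, since the $k_{d+1}^2$ in the numerator cancels $k_{d+1}^2 - 0$ in the denominator.

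The only real calculation is then the inner sum over $k_{d+1}$:
$$
\sum_{k_{d+1}=1}^{k_d} (-1)^{k_{d+1}}\, 2^{\Delta(k_d, k_{d+1})}
= (-1)^{k_d} + 2\sum_{k_{d+1}=1}^{k_d-1}(-1)^{k_{d+1}} = -1,
$$
which is independent of the parity of $k_d$. This sign $-1$ cancels the overall minus sign in front of the right-hand side of Theorem~\ref{T1}, producing the plus sign appearing in~(\ref{eq06}). For the remaining indices $i\le d$, the values of $\delta_i$ and of $S^{\sharp}_{k_{i-1}, k_i}(\{1\}^{c_i-3})$ coincide with those in the statement of the corollary, because $c_{d+1}=1$ in both formulations. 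The only conceptual point to watch is the bookkeeping of the conventions ($c_{d+2}=0$ in Theorem~\ref{T1} versus $c_{d+1}=1$ in the corollary, and the empty-string case of $S^{\sharp}$); no further induction or new identity is needed beyond Theorem~\ref{T1}.
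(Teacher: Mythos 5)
Your proposal is correct and follows essentially the same route as the paper: identify the left-hand side with $F_n(c_1,\ldots,c_d,1;z_0,\ldots,z_d,0)$, apply Theorem~\ref{T1} with $d$ replaced by $d+1$, and evaluate the inner sum $\sum_{k_{d+1}=1}^{k_d}(-1)^{k_{d+1}}2^{\Delta(k_d,k_{d+1})}=-1$, which flips the sign. Your bookkeeping of $\delta_{d+1}=3$ and the empty-string convention for $S^{\sharp}$ matches the paper's computation exactly.
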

\begin{proof}
Note that the generating function on the left of (\ref{eq06}) is exactly $$F_n(c_1, \ldots, c_{d}, 1; z_0, z_1, \ldots, z_{d}, 0).$$
Therefore, by Theorem \ref{T1} with $d$ replaced by $d+1$, the inner sum over $k_{d+1}$ is reduced to
\begin{equation*}
\sum_{k_{d+1}=1}^{k_{d}}\frac{(-1)^{k_{d+1}}}{k_{d+1}^2-0^2}\cdot k_{d+1}^2\cdot 2^{\Delta(k_{d}, k_{d+1})}
=\sum_{k_{d+1}=1}^{k_{d}}(-1)^{k_{d+1}}\cdot 2^{\Delta(k_{d}, k_{d+1})}=2\sum_{k_{d+1}=1}^{k_{d}-1}(-1)^{k_{d+1}}+(-1)^{k_{d}}=-1,
\end{equation*}
since 
$$
\sum_{k_{d+1}=1}^{k_{d}-1} (-1)^{k_{d+1}}=\begin{cases}
-1, &\quad\text{if \,  $k_{d}$ is even}; \\
0, &\quad\text{if \,  $k_{d}$ is odd},
\end{cases}
$$
and the formula follows.
\end{proof}

\begin{corollary} \label{C3.5}
For any integers $n\ge 1$, $d\ge 0$,  $0\le m\le d$, $c_1, \ldots, c_{d}\in{\mathbb N}\setminus{\{2\}}$, and any complex numbers $z_0,  \ldots, z_{d}$ 
with $|z_j|<1, j=0, 1,\ldots, d$,
we have
\begin{equation*} 
\begin{split}
\underset{a_m\ge 1}{\sum_{a_0, a_1, \ldots, a_{d}\ge 0}}&H_n^{\star}(\{2\}^{a_0}, c_1, \{2\}^{a_1},  \ldots, c_{d}, \{2\}^{a_{d}}, 1) \,z_0^{2a_0}z_1^{2a_1}\cdots z_{d}^{2a_{d}} \\
&=z_m^2\sum_{n\ge k_0\ge k_1\ge\cdots\ge k_{d}\ge 1}\frac{\binom{n}{k_0}}{\binom{n+k_0}{k_0}}\,\frac{1}{k_m^2}\, \prod_{i=0}^{d}
\frac{(-1)^{k_i\delta_i}k_i^{\delta_i-1}}{k_i^2-z_i^2}\,S^{\sharp}_{k_{i-1}, k_i}(\{1\}^{c_i-3}),
\end{split}
\end{equation*}
where $c_0=c_{d+1}=1$, $k_{-1}=0$, $\delta_i=\delta(c_i)+\delta(c_{i+1})$, and $\delta(c)$
is defined in Theorem {\rm \ref{T1}}.
\end{corollary}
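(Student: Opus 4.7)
The plan is to derive Corollary \ref{C3.5} from Corollary \ref{C3} by exactly the same device used to derive Corollary \ref{C2} from Theorem \ref{T1}. Write $G_n(c_1,\ldots,c_d; z_0,\ldots,z_d)$ for the full (unrestricted) generating function of $H_n^\star(\{2\}^{a_0}, c_1, \ldots, c_d, \{2\}^{a_d}, 1)$ appearing on the left-hand side of \eqref{eq06}. The first step is to observe that since $z_m^{2a_m}$ vanishes whenever $a_m \ge 1$ and $z_m = 0$, setting $z_m = 0$ in $G_n$ picks out exactly the summands with $a_m = 0$. Hence
\[
\underset{a_m \ge 1}{\sum_{a_0, \ldots, a_d \ge 0}} H_n^\star(\{2\}^{a_0}, c_1, \ldots, c_d, \{2\}^{a_d}, 1) \, z_0^{2a_0} \cdots z_d^{2a_d} = G_n(c_1,\ldots,c_d; z_0,\ldots,z_d) - G_n(c_1,\ldots,c_d; z_0,\ldots,z_{m-1}, 0, z_{m+1},\ldots,z_d),
\]
which is the exact analogue of identity \eqref{ge1}.

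The second step is to apply Corollary \ref{C3} to each of the two generating functions and compute the difference termwise. Since the only factor on the right-hand side of \eqref{eq06} that depends on $z_m$ is $1/(k_m^2 - z_m^2)$ — the binomial weight $\binom{n}{k_0}/\binom{n+k_0}{k_0}$, the sharp sums $S^\sharp_{k_{i-1},k_i}(\{1\}^{c_i-3})$, and all the remaining numerator factors are independent of $z_m$ — the subtraction passes through the outer nested sum over $k_0 \ge \cdots \ge k_d \ge 1$, and the elementary identity
\[
\frac{1}{k_m^2 - z_m^2} - \frac{1}{k_m^2} = \frac{z_m^2}{k_m^2(k_m^2 - z_m^2)}
\]
restores the factor $1/(k_m^2 - z_m^2)$ while producing the additional $z_m^2/k_m^2$ displayed in the statement. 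This matches the claimed right-hand side verbatim, and the sign works out because Corollary \ref{C3} (unlike Theorem \ref{T1}) carries an overall $+$ sign.

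The verification is essentially bookkeeping and I do not anticipate any real obstacle. The argument is uniform in $m$, including the boundary cases $m = 0$ and $m = d$, precisely because the $z_m$-dependence on the right-hand side of \eqref{eq06} is entirely encapsulated in the single factor $1/(k_m^2 - z_m^2)$, regardless of the position of $m$ in the outer chain of inequalities.
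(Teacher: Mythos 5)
Your proposal is correct and matches the paper's own proof in substance: the paper likewise obtains the analogue of identity (\ref{ge1}) for the trailing-$1$ generating function (by applying (\ref{ge1}) with $d$ replaced by $d+1$ and setting $c_{d+1}=1$, $z_{d+1}=0$) and then invokes Corollary \ref{C3}. Your explicit termwise subtraction via $\tfrac{1}{k_m^2-z_m^2}-\tfrac{1}{k_m^2}=\tfrac{z_m^2}{k_m^2(k_m^2-z_m^2)}$ is exactly the bookkeeping the paper leaves implicit, and the sign check against the $+$ sign in Corollary \ref{C3} is right.
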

\begin{proof}
The formula follows from identity (\ref{ge1}) with  $d$ replaced by $d+1$ and then setting
$c_{d+1}=1$, $z_{d+1}=0$, and applying Corollary~\ref{C3}.
\end{proof}

\begin{corollary} \label{C4}
For any integers $n\ge 1$, $d\ge 0$,  $c_1, \ldots, c_{d}\in{\mathbb N}\setminus{\{2\}}$, and  $a_0, a_1  \ldots, a_{d}\ge 0$, 
we have
\begin{equation*} 
H_n^{\star}(\{2\}^{a_0}, c_1, \{2\}^{a_1},  \ldots, c_{d}, \{2\}^{a_{d}})
=-\sum_{n\ge k_0\ge k_1\ge\cdots\ge k_{d}\ge 1}\frac{\binom{n}{k_0}}{\binom{n+k_0}{k_0}}\, \prod_{i=0}^{d}\,
\frac{(-1)^{k_i\delta_i}}{k_i^{2a_i+3-\delta_i}}\,S^{\sharp}_{k_{i-1}, k_i}(\{1\}^{c_i-3})
\end{equation*}
with the same notation as in Theorem {\rm \ref{T1}}, and
\begin{equation*} 
H_n^{\star}(\{2\}^{a_0}, c_1, \{2\}^{a_1},  \ldots, c_{d}, \{2\}^{a_{d}}, 1)
=\!\sum_{n\ge k_0\ge\cdots\ge k_{d}\ge 1}\frac{\binom{n}{k_0}}{\binom{n+k_0}{k_0}}\, \prod_{i=0}^{d}\,
\frac{(-1)^{k_i\delta_i}}{k_i^{2a_i+3-\delta_i}}\,S^{\sharp}_{k_{i-1}, k_i}(\{1\}^{c_i-3}),
\end{equation*}
where $c_0=c_{d+1}=1$, $k_{-1}=0$, $\delta_i=\delta(c_i)+\delta(c_{i+1})$, and $\delta(c)$
is defined in Theorem {\rm \ref{T1}}.
Moreover, if $c_i\in\{1, 3\}$, then $S^{\sharp}_{k_{i-1}, k_i}(\{1\}^{c_i-3})$ is replaced by $2^{\Delta(k_{i-1}, k_i)}$ in the above formulas.
\end{corollary}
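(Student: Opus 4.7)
The plan is to obtain both identities in Corollary~\ref{C4} by extracting the coefficient of the monomial $z_0^{2a_0} z_1^{2a_1} \cdots z_d^{2a_d}$ on each side of the generating function identities already established: Theorem~\ref{T1} yields the formula without a trailing $1$, while Corollary~\ref{C3} yields the one with a trailing $1$. Since $|z_j| < 1 \le k_j$ for each $j$, every $z_i$-dependent factor admits the absolutely convergent geometric expansion
$$
\frac{(-1)^{k_i\delta_i}\, k_i^{\delta_i-1}}{k_i^2 - z_i^2} \;=\; k_i^{\delta_i-3}(-1)^{k_i\delta_i}\sum_{a_i\ge 0}\frac{z_i^{2a_i}}{k_i^{2a_i}} \;=\; \sum_{a_i\ge 0}\frac{(-1)^{k_i\delta_i}}{k_i^{2a_i+3-\delta_i}}\, z_i^{2a_i},
$$
which will produce precisely the exponent $2a_i+3-\delta_i$ appearing in (\ref{CZ4eq1}) and (\ref{CZ4eq2}).

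The next step is to substitute these expansions into the right-hand side of (\ref{T1part1}). Because the outer sum $\sum_{n\ge k_0\ge\cdots\ge k_d\ge 1}$ is finite and the $d+1$ inner geometric series converge absolutely, the resulting multiple sum is absolutely convergent for $|z_j|<1$, so we may freely interchange it with the nested summation in $k_0,\ldots,k_d$. Collecting the coefficient of $z_0^{2a_0}\cdots z_d^{2a_d}$ on the right-hand side then gives
$$
-\sum_{n\ge k_0\ge\cdots\ge k_d\ge 1}\frac{\binom{n}{k_0}}{\binom{n+k_0}{k_0}}\prod_{i=0}^d \frac{(-1)^{k_i\delta_i}}{k_i^{2a_i+3-\delta_i}}\, S^{\sharp}_{k_{i-1},k_i}(\{1\}^{c_i-3}),
$$
while the defining power series on the left-hand side of (\ref{T1part1}) has coefficient exactly $H_n^{\star}(\{2\}^{a_0},c_1,\{2\}^{a_1},\ldots,c_d,\{2\}^{a_d})$. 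Uniqueness of multivariate power series coefficients now delivers the first identity. The second identity follows in exactly the same fashion from Corollary~\ref{C3}; the only change in bookkeeping is that the convention $c_{d+1}=1$ modifies $\delta_d$, but this is already absorbed into the statement.

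For the final ``moreover'' clause, when every $c_i\in\{1,3\}$, we apply the same coefficient extraction to Corollary~\ref{C1} instead of Theorem~\ref{T1}, whereupon $S^{\sharp}_{k_{i-1},k_i}(\{1\}^{c_i-3})$ is replaced by $2^{\Delta(k_{i-1},k_i)}$ directly. I do not foresee any genuine obstacle: the entire argument is a formal power-series identity, and the main task is the bookkeeping check that the denominator exponent $(2a_i+2)-(\delta_i-1)=2a_i+3-\delta_i$ matches the claim, together with the (transparent) justification of interchanging the finite outer sum with the absolutely convergent inner geometric series.
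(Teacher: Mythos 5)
Your proposal is correct and is essentially the paper's own argument: the paper proves Corollary~\ref{C4} precisely by expanding $\frac{1}{k_i^2-z_i^2}$ in powers of $z_i$ and comparing coefficients of $z_0^{2a_0}\cdots z_d^{2a_d}$ in (\ref{T1part1}) and in (\ref{eq06}) of Corollary~\ref{C3}. Your extra bookkeeping (the exponent $2a_i+3-\delta_i$, the interchange of the finite outer sum with the absolutely convergent geometric series, and the reduction of $S^{\sharp}$ to $2^{\Delta(k_{i-1},k_i)}$ when $c_i\in\{1,3\}$) is exactly what the paper leaves implicit.
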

\begin{proof}
Expanding $\frac{1}{k_i^2-z_i^2}$ in powers of $z_i$ and comparing coefficients of $z_0^{2a_0}z_1^{2a_1}\cdots z_d^{2a_d}$
on both sides of equation (\ref{T1part1}), we get the first formula. The second formula follows similarly from~(\ref{eq06}).
\end{proof}

\section{Limit transfer to multiple zeta star values}

The purpose of this section is to justify the possibility of limit transfer from generating functions of multiple harmonic star sums
to generating functions of multiple zeta star values.

Let 
$$
F(c_1, \ldots, c_d; z_0, z_1, \ldots, z_d)=\sum_{a_0, a_1, \ldots, a_d\ge 0}\zeta^\star(\{2\}^{a_0}, c_1, \{2\}^{a_1}, \ldots, c_d, \{2\}^{a_d})\,z_0^{2a_0}z_1^{2a_1}\cdots z_d^{2a_d}.
$$
\begin{lemma} \label{transfer}
Let $d\in{\mathbb N}_0$, $z_0, z_1, \ldots, z_d\in{\mathbb C}$, $|z_j|<1$, $j=0,1,\ldots, d$, $c_1, \ldots, c_d\in{\mathbb N}\setminus\{2\}$, and $c_1\ge 3$ if $d\ge 1$. Then
$$
\lim_{n\to\infty} F_n(c_1, \ldots, c_d; z_0, z_1, \ldots, z_d) = F(c_1, \ldots, c_d; z_0, z_1, \ldots, z_d).
$$
Moreover, the convergence is uniform in any closed region $D: |z_0|\le q_0<1$, $|z_1|\le q_1<1, \ldots$, $|z_d|\le q_d<1$.
\end{lemma}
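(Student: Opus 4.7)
The plan is to deduce $F_n\to F$ uniformly on $D$ by dominated convergence, after establishing that the series defining $F$ converges absolutely and uniformly on $D$. Setting $q=\max_j q_j<1$, this requires bounding the positive-term series
\[
\Phi(q):=\sum_{a_0,\ldots,a_d\ge 0}\zeta^\star(\{2\}^{a_0},c_1,\ldots,c_d,\{2\}^{a_d})\,q^{2(a_0+\cdots+a_d)}.
\]

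To bound $\Phi(q)$, I would regroup the nested chain defining each $\zeta^\star$ according to the values $l_1\ge\cdots\ge l_d\ge 1$ sitting at the positions of $c_1,\ldots,c_d$. The block $\{2\}^{a_i}$ lying in $[l_{i+1},l_i]$ (with $l_0=\infty$, $l_{d+1}=1$) contributes the complete homogeneous symmetric polynomial $h_{a_i}$ in the variables $\{1/m^2:l_{i+1}\le m\le l_i\}$. Swapping the summations over $(a_0,\ldots,a_d)$ and $(l_1,\ldots,l_d)$ (by Tonelli, since all terms are positive) and applying the identity $\sum_{a\ge 0}h_a({\bf x})t^a=\prod_j(1-x_jt)^{-1}$ gives
\[
\Phi(q)=\sum_{l_1\ge\cdots\ge l_d\ge 1}\frac{1}{l_1^{c_1}\cdots l_d^{c_d}}\,\prod_{i=0}^d\prod_{l_{i+1}\le m\le l_i}\frac{1}{1-q^2/m^2}.
\]
Each inner product is bounded by the Euler product $\prod_{m\ge 1}(1-q^2/m^2)^{-1}=\pi q/\sin(\pi q)$, finite for $q<1$, so $\Phi(q)\le\bigl(\pi q/\sin(\pi q)\bigr)^{d+1}\,\zeta^\star(c_1,\ldots,c_d)$. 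The last factor is finite because $c_1\ge 3\ge 2$ (and equals $1$ when $d=0$); any $c_i=1$ for $i\ge 2$ only introduces logarithmic factors that are absorbed by $l_1^{c_1}$.

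Given this bound, $H_n^\star({\bf s})\uparrow\zeta^\star({\bf s})$ monotonically for each fixed ${\bf s}$ (positive-term truncation), and since $|z_j|^{2a_j}\le q^{2a_j}$ on $D$,
\[
|F(z)-F_n(z)|\le\sum_{a_0,\ldots,a_d\ge 0}\bigl[\zeta^\star-H_n^\star\bigr]\bigl(\{2\}^{a_0},c_1,\ldots,c_d,\{2\}^{a_d}\bigr)\,q^{2(a_0+\cdots+a_d)}.
\]
Each summand tends to $0$ and the whole is dominated by the summable majorant $\Phi(q)$, so dominated convergence yields $F_n\to F$ uniformly on $D$. The main obstacle is the regrouping identity producing $\Phi(q)$; its validity rests on positivity (to invoke Tonelli) and on recognizing the Euler $\sin$ product, whose finiteness precisely on the unit disk is what restricts the admissible polydisc in the statement.
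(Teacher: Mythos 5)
Your argument is correct, and its technical core coincides with the paper's: you both pass to the nested-sum representation in which each block $\{2\}^{a_i}$ is resummed into a partial Euler product $\prod_m(1-z_i^2/m^2)^{-1}$ (your derivation via $\sum_a h_a(\mathbf{x})t^a=\prod_j(1-x_jt)^{-1}$ is exactly the paper's identity $\prod_{k=m}^n(1-z^2/k^2)^{-1}=\sum_l z^{2l}H^\star_{n,m}(\{2\}^l)$), and you both extract the same majorant $\prod_j \frac{\pi q_j}{\sin(\pi q_j)}\cdot\zeta^\star(c_1,\ldots,c_d)$, whose finiteness uses $c_1\ge 3$ in precisely the way you indicate. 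The difference is only in how the limit is finalized. The paper inserts an intermediate function $F_{n,\infty}$ (outer summation truncated at $n$ but with the first product extended to infinity), bounds $|F_{n,\infty}-F_n|$ by an explicit tail estimate $\bigl|\prod_{k>n}(1-z_0^2/k^2)^{-1}-1\bigr|<\frac{1}{n-1}$, and bounds $|F-F_{n,\infty}|$ by the tail $\zeta^\star(c_1,\ldots,c_d)-H_n^\star(c_1,\ldots,c_d)$ of the convergent majorant. You instead stay at the level of the coefficient series, use the monotone convergence $H_n^\star\uparrow\zeta^\star$ termwise, and invoke dominated convergence (Tannery) against the summable majorant $\Phi(q)$; since your bound on $|F-F_n|$ depends only on $q$ and not on $z$, uniformity on $D$ follows. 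Your route is slightly cleaner in that it avoids the auxiliary $F_{n,\infty}$ and the explicit $1/(n-1)$ estimate; the paper's route yields a quantitative rate of convergence, which your qualitative dominated-convergence step does not. Both are complete proofs of the lemma as stated.
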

\begin{proof}
For positive integers $n, m, s_1, \ldots, s_r$ and $n\ge m$, let 
$$
H_{n,m}^\star(s_1, \ldots, s_r)=\sum_{n\ge k_1\ge \cdots\ge k_r\ge m} \frac{1}{k_1^{s_1}\cdots k_r^{s_r}}.
$$
Notice that we allow also the case $n=\infty$  in the above definition if $s_1>1$. Then observing that 
$$
\prod\limits_{k=m}^n \left(1-\frac{z^2}{k^2}\right)^{-1} = \sum_{l=0}^{\infty}z^{2l} \cdot H_{n,m}^\star(\{2\}^l), \qquad |z|<1,
$$
we have the following representations:
\begin{equation} \label{eq11}
\begin{split}
F_n(&c_1, \ldots, c_d; z_0, z_1, \ldots, z_d) \\
&=\sum_{n\ge k_1\ge k_2\ge\cdots\ge k_d\ge 1}\frac{1}{\prod\limits_{k=k_1}^n\left(1-\frac{z_0^2}{k^2}\right)
\cdot k_1^{c_1}\cdot \prod\limits_{k=k_2}^{k_1}\left(1-\frac{z_1^2}{k^2}\right)\cdot k_2^{c_2} \cdots
k_d^{c_d}\cdot \prod\limits_{k=1}^{k_d}\left(1-\frac{z_d^2}{k^2}\right)},
\end{split}
\end{equation}
if $d\ge 1$, and
\begin{equation} \label{D}
F_n(\,; z_0)=\prod\limits_{k=1}^n\left(1-\frac{z_0^2}{k^2}\right)^{-1}.
\end{equation}
Note that the case $n=\infty$ in (\ref{eq11}) and (\ref{D}) corresponds to $F(c_1, \ldots, c_d; z_0, z_1, \ldots, z_d)$, i.e., 
\begin{equation} \label{inf}
F_{\infty}(c_1, \ldots, c_d; z_0, z_1, \ldots, z_d) = F(c_1, \ldots, c_d; z_0, z_1, \ldots, z_d).
\end{equation}
Then by (\ref{D}) and (\ref{inf}), we easily conclude the theorem for $d=0$.

\noindent Now consider the case $d\ge 1$. From (\ref{eq11}) we have the following  upper bound:
\begin{equation} \label{eq12}
\begin{split}
|F_n(&c_1, \ldots, c_d; z_0, z_1, \ldots, z_d)| \\
&\le\sum_{n\ge k_1\ge k_2\ge\cdots\ge k_d\ge 1}\frac{1}{\prod\limits_{k=k_1}^n\left(1-\frac{|z_0|^2}{k^2}\right)
\cdot k_1^{c_1}\cdot \prod\limits_{k=k_2}^{k_1}\left(1-\frac{|z_1|^2}{k^2}\right)\cdot k_2^{c_2} \cdots
k_d^{c_d}\cdot \prod\limits_{k=1}^{k_d}\left(1-\frac{|z_d|^2}{k^2}\right)} \\
&< \frac{\pi|z_0|}{\sin(\pi|z_0|)}\cdot\frac{\pi|z_1|}{\sin(\pi|z_1|)} \cdots \frac{\pi|z_d|}{\sin(\pi|z_d|)}\cdot H_n^\star(c_1, \ldots, c_d) 
< \prod\limits_{j=0}^d\frac{\pi q_j}{\sin(\pi q_j)} \cdot \zeta^\star(c_1, \ldots, c_d)
\end{split}
\end{equation}
for all $z_0, z_1, \ldots, z_d\in{\mathbb C}$ such that $|z_0|\le q_0<1$, $|z_1|\le q_1<1,\ldots$,  $|z_d|\le q_d<1$.

\noindent Let 
\begin{equation*} 
\begin{split}
F_{n, \infty}(&c_1, \ldots, c_d; z_0, z_1, \ldots, z_d) \\
&=\sum_{n\ge k_1\ge k_2\ge\cdots\ge k_d\ge 1}\frac{1}{\prod\limits_{k=k_1}^{\infty}\left(1-\frac{z_0^2}{k^2}\right)
\cdot k_1^{c_1}\cdot \prod\limits_{k=k_2}^{k_1}\left(1-\frac{z_1^2}{k^2}\right)\cdot k_2^{c_2} \cdots
k_d^{c_d}\cdot \prod\limits_{k=1}^{k_d}\left(1-\frac{z_d^2}{k^2}\right)}.
\end{split}
\end{equation*}
Then
\begin{equation}  \label{13}
\begin{split}
|F_{n, \infty}&(c_1, \ldots, c_d; z_0, z_1, \ldots, z_d) -  F_{n}(c_1, \ldots, c_d; z_0, z_1, \ldots, z_d)|\\
&\le\sum_{n\ge k_1\ge\cdots\ge k_d\ge 1}\frac{1}{\prod\limits_{k=k_1}^{n}\left(1-\frac{|z_0|^2}{k^2}\right)
\cdot k_1^{c_1} \cdots
k_d^{c_d}\cdot \prod\limits_{k=1}^{k_d}\left(1-\frac{|z_d|^2}{k^2}\right)}\cdot
\Biggl|\frac{1}{\prod\limits_{k=n+1}^{\infty}\left(1-\frac{z_0^2}{k^2}\right)}-1\Biggr|.
\end{split}
\end{equation}
Since
\begin{equation*}
\Biggl|\frac{1}{\prod\limits_{k=n+1}^{\infty}\left(1-\frac{z_0^2}{k^2}\right)}-1\Biggr|=\left|\sum_{l=1}^{\infty}H_{\infty, n+1}^\star(\{2\}^l) z_0^{2l}\right|
\le \sum_{l=1}^{\infty}H_{\infty, n+1}^\star(\{2\}^l) |z_0|^{2l} < \sum_{l=1}^{\infty}H_{\infty, n+1}^\star(\{2\}^l)
\end{equation*}
and
\begin{equation*}
H_{\infty, n+1}^{\star}(\{2\}^l)=\sum_{k_1\ge \cdots \ge k_l\ge n+1}\frac{1}{k_1^2\cdots k_l^2} < \left(\sum_{k=n+1}^{\infty}\frac{1}{k^2}\right)^l
< \left(\int_n^{\infty}\frac{dx}{x^2}\right)^l = \frac{1}{n^l}
\end{equation*}
we get
\begin{equation} \label{eq14}
\Biggl|\frac{1}{\prod\limits_{k=n+1}^{\infty}\left(1-\frac{z_0^2}{k^2}\right)}-1\Biggr| < \sum_{l=1}^{\infty}\frac{1}{n^l}=\frac{1/n}{1-1/n}=\frac{1}{n-1}
\end{equation}
and therefore, by (\ref{eq12})--(\ref{eq14}),
\begin{equation} \label{eq15}
|F_{n, \infty}(c_1, \ldots, c_d; z_0, \ldots, z_d) -  F_{n}(c_1, \ldots, c_d; z_0, \ldots, z_d)| < \frac{1}{n - 1} 
 \prod\limits_{j=0}^d\frac{\pi q_j}{\sin(\pi q_j)} \cdot \zeta^\star(c_1, \ldots, c_d) \to 0 
\end{equation}
as $n\to \infty$  for all $z_0, z_1, \ldots, z_d\in{\mathbb C}$ such that $|z_0|\le q_0<1$, $|z_1|\le q_1<1,\ldots$,  $|z_d|\le q_d<1$.

\noindent From the other side, 
\begin{equation} \label{eq16}
\begin{split}
 |F(c_1, &\ldots, c_d; z_0, \ldots, z_d) -  F_{n, \infty}(c_1, \ldots, c_d; z_0, \ldots, z_d)|  \\
 &\le \underset{k_1>n}{\sum_{k_1\ge k_2\ge\cdots\ge k_d\ge 1}}\frac{1}{\prod\limits_{k=k_1}^{\infty}\left(1-\frac{|z_0|^2}{k^2}\right)
\cdot k_1^{c_1}\cdot \prod\limits_{k=k_2}^{k_1}\left(1-\frac{|z_1|^2}{k^2}\right)\cdot k_2^{c_2} \cdots
k_d^{c_d}\cdot \prod\limits_{k=1}^{k_d}\left(1-\frac{|z_d|^2}{k^2}\right)} \\
& <  \prod\limits_{j=0}^d\frac{\pi q_j}{\sin(\pi q_j)} \cdot \Bigl(\zeta^\star(c_1, \ldots, c_d) - H_n^\star(c_1, \ldots, c_d)\Bigr) \to 0 \quad (n\to \infty)
 \end{split}
 \end{equation}
uniformly on $D$. Now combining (\ref{eq15}) and (\ref{eq16}), we conclude the proof.
\end{proof}

Taking the limit as $n\to\infty$ in Theorem \ref{T1}  by Lemmas \ref{L3} and \ref{transfer}, we get 
Theorem \ref{T2}.  Theorem \ref{CZ4} follows from Corollary \ref{C4}
by taking the limit $n\to\infty$ and applying Lemma~\ref{L3}.  Theorem \ref{CZ2} for $m\ge 1$ follows from (\ref{ge1}), Lemma \ref{transfer},
and Lemma \ref{L3} by taking the limit $n\to\infty$.
Theorem \ref{CZ2}  for $m=0$ follows from Theorem \ref{CZ4}
by summing identity (\ref{CZ4eq1}) over the corresponding set of integers $a_0, a_1, \ldots, a_d$. 
Corollary \ref{CZ3.5} follows from Theorem \ref{CZ2} by setting $c_d=1$, $z_d=0$, and applying the same argument as in the proof of Corollary \ref{C3}.
Theorems~\ref{2-3}--\ref{2-3-1} are simple consequences of Theorem \ref{T2} and Theorem \ref{CZ2}.
Note also that Theorem \ref{T2} can be obtained from Theorem \ref{CZ4} by summation over $a_0, a_1, \ldots, a_d\ge 0$.

\section*{Acknowledgement}

The authors would like to thank the anonymous referee for the careful reading of the manuscript and helpful comments and suggestions.


\begin{thebibliography}{99}

\bibitem{BL}
J.~M.~Borwein, D.~M.~Bradley, D.~J.~Broadhurst,  P.~Lison\v{e}k, {\it Special values of multiple polylogarithms},
Trans.~Amer.~Math.~Soc.\ {\bf 353} (2001), no.~3, 907--941.

\bibitem{Br:12}
F.~Brown, {\it Mixed Tate motives over ${\mathbb Z}$},  Ann.\ Math. (2) {\bf 175} (2012), no.~2, 949--976.

\bibitem{FS:98}
P.~Flajolet, B.~Salvy, {\it Euler sums and contour integral representations},  Experiment.\ Math.\ {\bf 7} (1998), no.~1, 15--35.

\bibitem{HP:17} Kh.~Hessami Pilehrood, T.~Hessami Pilehrood, {\it An alternative proof of a theorem of Zagier}, J.\ Math. \ Anal.\ Appl.\ {\bf 449} (2017), no.~1, 168--175.

\bibitem{HP:18} Kh.~Hessami Pilehrood, T.~Hessami Pilehrood, {\it Multiple zeta star values on {\rm 3-2-1} indices}, preprint, 2018, arXiv:1806.10510.


\bibitem{HPT:14} Kh.~Hessami Pilehrood, T.~Hessami Pilehrood, R.~Tauraso,  {\it New properties of multiple harmonic sums modulo $p$ and
$p$-analogues of Leshchiner's series},
Trans.~Amer.~Math.~Soc.\ {\bf 366} (2014), no.~6, 3131--3159.

\bibitem{HPZ:16}
Kh.~Hessami Pilehrood, T.~Hessami Pilehrood,  J.~Zhao,  {\it On $q$-analogs of some families of multiple harmonic sums and multiple zeta star value identities},
Commun.\ Number Theory Phys.\  {\bf 10} (2016), no.~4, 805--832.

\bibitem{Hof:97}
M.~E.~Hoffman, {\it The algebra of multiple harmonic series}, J.~Algebra {\bf 194} (1997), no.~2, 477--495.

\bibitem{LZ:15}
E.~Linebarger, J.~Zhao, {\it A family of multiple harmonic sum and multiple zeta star value identities}, Mathematika {\bf 61} (2015), no.~1, 63--71.

\bibitem{OZ:08}
Y.~Ohno, W.~Zudilin, {\it Zeta stars}, Commun.\ Number Theory Phys.\ {\bf 2} (2008), 325--347.



\bibitem{Za:12}
D.~Zagier, 
{\it Evaluation of the multiple zeta values $\zeta(2,\ldots,2,3,2,\ldots,2)$},  Ann.\  Math. (2) {\bf 175} (2012), no.~2, 977--1000.


\bibitem{Z:16}
J.~Zhao, {\it Identity families of multiple harmonic sums and multiple zeta star values}, J.\ Math.\ Soc.\ Japan {\bf 68} (2016), no.~4, 1669--1694. 




\end{thebibliography}
\end{document}